\theoremstyle{plain}
\newtheorem{theorem}{Theorem}[section]
\newtheorem{cor}[theorem]{Corollary}
\newtheorem{prop}[theorem]{Proposition}
\newtheorem{lemma}[theorem]{Lemma}
\theoremstyle{definition}
\newtheorem{example}[theorem]{Example}
\newtheorem{question}[theorem]{Question}
\DeclareMathOperator{\re}{Re}
\renewcommand{\subset}{\subseteq}
\title[Geometry of the space of compact operators]
{Geometry of the space of compact operators endowed with the numerical radius norm}
\author[M.~Han]{Manwook Han}
\address[M.~Han]{Department of Mathematics, Chungbuk National University, Cheongju, Chungbuk 28644, Republic of Korea}
\email{\texttt{mwhan0828@gmail.com}}
\author[S.~K.~Kim]{Sun Kwang Kim}
\address[S.~K.~Kim]{Department of Mathematics, Chungbuk National University, Cheongju, Chungbuk 28644, Republic of Korea\newline
	\href{http://orcid.org/0000-0002-9402-2002}{ORCID: \texttt{0000-0002-9402-2002}  }}
\email{\texttt{skk@chungbuk.ac.kr}}
\thanks{}
\keywords{M-ideal, the space of compact operators, numerical index, numerical radius, numerical radius attaining operator}
\subjclass[2010]{Primary: 46B20; Secondary: 46B04, 46B28}                  
\begin{document}

\begin{abstract}We investigate the space of bounded linear operators on a Banach space equipped with a norm which is equivalent to the operator norm such that the subspace of compact operators is an M-ideal. In particular, we observe that the space of compact operators on $\ell_p$ $(1<p<\infty)$ equipped with the numerical radius norm is an M-ideal whenever the numerical index of $\ell_p$ is not $0$. On the other hand, we show that the space of compact operators on a Banach space containing an isomorphic copy of $\ell_1$ whose numerical index is greater than $1/2$ is not M-ideals. We also study the proximinality, the existence of farthest points and the compact perturbation property for the numerical radius.
\end{abstract}

\maketitle
\section{Introduction}
For a Banach space $X$ and its closed subspace $J\subset X$, we say that $J$ is an M-ideal in $X$ if $X^*=J^*\oplus_1 J^\perp$ where $J^\perp$ is the annihilator of $J$.  This notion is firstly introduced by Alfsen and Effros in \cite{AE},  and afterwards many authors obtained fruitful observations on the space of compact operators. For a Banach space $X$, we denote the space of bounded linear operators from $X$ to itself by $\mathcal{L}(X)$, and the subspace of  compact operators in $\mathcal{L}(X)$ by $\mathcal{K}(X)$. In 1950, Dixmier observed that $\mathcal{K}(\ell_2)$ is an M-ideal in $\mathcal{L}(\ell_2)$ (see \cite{Dix}), and it is the first non-trivial example of M-ideals in the space of linear operators.  In 1979, Lima proved that $\mathcal{K}(\ell_p)$ is an M-ideal in $\mathcal{L}(\ell_p)$ for $1<p<\infty$, and that $\mathcal{K}(c_0)$ is an M-ideal in $\mathcal{L}(c_0)$ (see \cite{L1}). A few years later, Harmand and Lima showed that for any separable Banach space $X$ if $\mathcal{K}(X)$ is an M-ideal in $\mathcal{L}(X)$, then $X^*$ has the metric compact approximation property (see \cite{HL}). This result was extended to the non-separable spaces by Oja who demonstrated that M-ideals of compact operators are separably determined (see \cite{Oja1}). For more information on the theory of M-ideals, we refer the reader to the classic monograph \cite{HWW}.

In this paper, we focus on the spaces $\mathcal{L}_N(X)$ and $\mathcal{K}_N(X)$ which are the space of linear operators and the space of compact operators equipped with a norm $N$ which is equivalent to the operator norm. One of the most typical example of such norms can be derived by the numerical radius. For a set
$$\Pi(X)=\{(x^*,x)\in S_{X^*}\times S_{X}:x^*(x)=1\},$$
the numerical radius $v(T)$ of  $T\in\mathcal{L}(X)$ is defined by
$$v(T)=\sup\{|x^*Tx|\in\mathbb{K}:(x^*,x)\in\Pi(X)\}.$$
The numerical index $n(X)$ of $X$ is the infimum of numerical radius of norm one operators. Indeed,  
$$n(X)=\inf\{v(T):\|T\|=1\}.$$

It is obvious that $v$ is a seminorm on $\mathcal{L}(X)$, and that if $n(X)>0$, then $v$ is equivalent to the operator norm. In particular, if $n(X)=1$, then the numerical radius and operator norm are the same. It is worth to note that $n(X)>0$ whenever $X$ is a complex Banach space. We refer the reader to \cite{BD1, BD2} for more details on numerical radius and numerical index.

We first observe that $\mathcal{K}_v(\ell_p)$ is an M-ideal in $\mathcal{L}_v(\ell_p)$ for $1<p<\infty$ whenever the numerical index of $\ell_p$ is not $0$. Secondly, we obtain some characterizations of $\mathcal{L}_N(X)$ such that $\mathcal{K}_N(X)$ is an M-ideal. These characterizations are the analogy of the following result of Werner who characterized $\mathcal{L}(X)$ such that $\mathcal{K}(X)$ is an M-ideal. In the statement, $\|S\|_e$ for $S\in \mathcal{L}(X)$ is the essential norm which is the distance from $S$ to $\mathcal{K}(X)$. 

\begin{theorem}\label{eqvalence11}\cite{Wer}
For  a Banach space $X$, the following are equivalent.
\begin{enumerate}
\item $\mathcal{K}(X)$ is an M-ideal in $\mathcal{L}(X)$.
\item For each $T\in\mathcal{L}(X)$, there is a net $(K_\alpha)$ in $\mathcal{K}(X)$ such that $K_\alpha^*$ converges to $T^*$ strongly and that
$$\limsup_\alpha\|S+T-K_\alpha\|\leq\max\{\|S\|,\|S\|_e+\|T\|\} \text{~for every~} S\in\mathcal{L}(X).$$
\item For each $T\in\mathcal{L}(X)$, there is a bounded net $(K_\alpha)$ in $\mathcal{K}(X)$ such that $K_\alpha^*$ converges to $T^*$ strongly and that
$$\limsup_\alpha\|S+T-K_\alpha\|\leq\max\{\|S\|,\|T\|\} \text{~for every~} S\in\mathcal{K}(X).$$
\end{enumerate}
\end{theorem}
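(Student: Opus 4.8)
The plan is to prove the chain $(1)\Rightarrow(2)\Rightarrow(3)\Rightarrow(1)$, the workhorse being the $n$-ball characterization of M-ideals (\cite{HWW}): $\mathcal K(X)$ is an M-ideal in $\mathcal L(X)$ if and only if for all $S\in B_{\mathcal L(X)}$, $K_1,\dots,K_n\in B_{\mathcal K(X)}$ and $\varepsilon>0$ there is $K\in\mathcal K(X)$ with $\|S+K_i-K\|\le 1+\varepsilon$ for $i=1,\dots,n$ (the $3$-ball case already self-improving to every $n$).

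Two of the three implications are short. For $(3)\Rightarrow(1)$: given $n$-ball data $S,K_1,\dots,K_n,\varepsilon$, apply $(3)$ with $T:=S$ (note $\|T\|=\|S\|\le1$) to get a bounded net $(L_\alpha)\subset\mathcal K(X)$ with $\limsup_\alpha\|R+S-L_\alpha\|\le\max\{\|R\|,\|S\|\}$ for all $R\in\mathcal K(X)$; taking $R=K_i$ forces $\limsup_\alpha\|K_i+S-L_\alpha\|\le1$, so by directedness some single $\alpha$ gives $\|K_i+S-L_\alpha\|<1+\varepsilon$ for every $i$, and $K:=L_\alpha$ works (the strong convergence of adjoints plays no role here). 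For $(2)\Rightarrow(3)$: apply $(2)$ to $T$; with $S=0$ the estimate reads $\limsup_\alpha\|T-K_\alpha\|\le\|T\|$, so the net is eventually bounded and we pass to that tail; for $S\in\mathcal K(X)$ we have $\|S\|_e=0$, so $\max\{\|S\|,\|S\|_e+\|T\|\}=\max\{\|S\|,\|T\|\}$, which is the estimate demanded by $(3)$.

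The substance is $(1)\Rightarrow(2)$. Fix $T\in\mathcal L(X)$, let $\Pi$ be the M-projection on $\mathcal L(X)^{**}$ with range $\mathcal K(X)^{\perp\perp}$ (the adjoint of the L-projection onto the canonical copy of $\mathcal K(X)^*$ in $\mathcal L(X)^*$), and split $\widehat T=\tau+\rho$ with $\tau=\Pi\widehat T\in\mathcal K(X)^{\perp\perp}$ and $\rho=\widehat T-\tau$; from $\mathcal L(X)^{**}/\mathcal K(X)^{\perp\perp}\cong(\mathcal L(X)/\mathcal K(X))^{**}$ one reads off $\|\rho\|=\dist(T,\mathcal K(X))=\|T\|_e$. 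The $n$-ball property yields the weak$^*$-density property of M-ideals (\cite{HWW}): a net $(L_\beta)\subset\mathcal K(X)$ with $L_\beta\to\tau$ weak$^*$ in $\mathcal L(X)^{**}$ and $\limsup_\beta\|W-L_\beta\|\le\|\widehat W-\tau\|$ for every $W\in\mathcal L(X)$. Taking $W=S+T$ and decomposing through $\Pi$ gives
\[
\limsup_\beta\|S+T-L_\beta\|\ \le\ \|\widehat S+\rho\|\ =\ \max\big\{\|\Pi\widehat S\|,\ \|(\Id-\Pi)\widehat S+\rho\|\big\}\ \le\ \max\{\|S\|,\ \|S\|_e+\|T\|\},
\]
which is precisely the norm inequality in $(2)$. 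The missing piece is the strong convergence $L_\beta^*\to T^*$. Testing the weak$^*$-convergence $L_\beta\to\tau$ against the bounded functionals $A\mapsto x^{**}(A^*x^*)$ on $\mathcal L(X)$ --- which, their norm being already attained on rank-one operators, lie in the copy of $\mathcal K(X)^*$, where $\tau=\Pi\widehat T$ and $\widehat T$ take the same value --- one obtains $L_\beta^*x^*\to T^*x^*$ weakly in $X^*$ for each $x^*$, i.e.\ $L_\beta\to T$ in the locally convex topology $\mathcal T$ on $\mathcal L(X)$ of the seminorms $A\mapsto|x^{**}(A^*x^*)|$. Since $\mathcal T$ and the topology of the seminorms $A\mapsto\|A^*x^*\|$ have the same dual on $\mathcal L(X)$, Mazur's theorem places $T$ in the strong-adjoint closure of $\operatorname{conv}\{L_\beta:\beta\ge\beta_0\}$ for every $\beta_0$. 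Indexing by triples $(\beta_0,G,\varepsilon)$ with $G\subset X^*$ finite and choosing, for each, a finite convex combination $K_{\beta_0,G,\varepsilon}$ of $\{L_\beta:\beta\ge\beta_0\}$ with $\|K_{\beta_0,G,\varepsilon}^*x^*-T^*x^*\|<\varepsilon$ on $G$, we get compact operators whose adjoints converge strongly to $T^*$; and since $\|S+T-\sum_i c_iL_{\beta_i}\|\le\max_i\|S+T-L_{\beta_i}\|$, the norm estimate is preserved in the limit as soon as $\beta_0$ is past the threshold beyond which $\|S+T-L_\beta\|$ is within $\eta$ of $\limsup_\gamma\|S+T-L_\gamma\|$.

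I expect this last merging step to be the genuine obstacle: the weak$^*$-density net controls $\|S+T-L_\beta\|$ but converges to $T$ only in the weak operator sense, while the obvious strong-adjoint approximations such as $K=TQ$ leave only the lossy bound $\|S\|+\|T\|_e$, so the two schemes must be combined without spoiling either. The device that makes this go through is that the weak$^*$-density net \emph{already} converges to $T$ in the adjoint-weak topology, a topology compatible with the adjoint-strong one, so a single Mazur convexification delivers both the strong convergence of the adjoints and, by convexity and thresholding, the norm estimate; the bookkeeping of one net against finitely many $S$'s and finitely many $x^*$'s is where care is needed. One should also note that the weak$^*$-density property invoked here really uses the full $n$-ball property and is not a formal consequence of the $3$-ball property alone.
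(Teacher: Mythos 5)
The paper does not actually prove this statement---it is quoted from Werner \cite{Wer} with a citation and no argument---so there is no in-paper proof to compare against. Your proposal is correct and is essentially a reconstruction of Werner's original argument: the implications $(3)\Rightarrow(1)$ (via the restricted $3$-ball property) and $(2)\Rightarrow(3)$ are handled exactly as one should, and for the substantive direction $(1)\Rightarrow(2)$ you correctly combine the bidual M-decomposition $\widehat T=\Pi\widehat T+\rho$ with $\|\rho\|=\|T\|_e$, the weak$^*$-approximation net with the uniform norm estimate $\limsup_\beta\|W-L_\beta\|\le\|\widehat W-\tau\|$ coming from the $n$-ball machinery of \cite{HWW}, the observation that the functionals $x^{**}\otimes x^*$ lie in the canonical copy of $\mathcal K(X)^*$ (since their norm is already computed on rank-one operators), and a Mazur convexification indexed so as to preserve the norm estimates while upgrading adjoint-weak to adjoint-strong convergence. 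The one quibble is your closing caveat: the $n$-ball property \emph{is} a formal consequence of the $3$-ball property (that self-improvement is part of \cite[Theorem I.2.2]{HWW}, as you note yourself earlier), so the remark should say the derivation is nontrivial rather than unavailable.
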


Using the aforementioned characterizations, we derive an upper bound of the numerical index of a Banach space $X$ whenever $\mathcal{K}_v(X)$ is an M-ideal in $\mathcal{L}_v(X)$. As a corollary, we show that if $X$ is a Banach space such that $n(X)>1/2$ and $\mathcal{K}_v(X)$ is an M-ideal in $\mathcal{L}_v(X)$, then it is Asplund. 

We also investigate the geometry of such M-ideals of compact operators. Under the assumptions that  $n(X)>0$ and $\mathcal{K}_v(X)$ is an M-ideal in $\mathcal{L}_v(X)$, we get that every extreme point of $B_{\mathcal{K}_v(X)^*}$ is of the form $\lambda x^{**}\otimes x^*$ where $(x^{**},x^*)\in\Pi(X^*)$ and $\lambda\in\mathbb{T}$. This leads us to see that if the adjoint $T^*$ of $T\in\mathcal{L}_v(X)$ does not attain its numerical radius, then $T$ does not have a farthest point in $B_{\mathcal{K}_v(X)}$ and it holds that
$$v(T)=\inf_{K\in\mathcal{K}_v(X)}v(T-K).$$
 The latter one shows that the set of operators whose adjoint operators do not attain their numerical radius is nowhere dense in $\mathcal{L}_v(X)$ under these assumptions.

\section{Results}

\subsection{Examples of proper M-ideals}~

As it is shown that $\mathcal{K}(\ell_p)$ for $1<p<\infty$ is a proper M-ideal, an M-ideal which is not an M-summand in $\mathcal{L}(\ell_p)$, we show that $\mathcal{K}_v(\ell_p)$ is also a proper M-ideal in $\mathcal{L}_v(\ell_p)$ whenever $n(\ell_p)>0$. In order to see this, we present the well known 3-ball property which is the geometric characterization of an M-ideal.

\begin{theorem}\label{3ball}\cite[I.2. Theorem 2.2]{HWW}
For a closed subspace $J$ of a Banach space $X$, the followings are equivalent.
\begin{enumerate}
\item $J$ is an M-ideal in $X$.
\item (The 3-ball property) If $x_1,x_2,x_3\in X$ and $r_1,r_2,r_3>0$ satisfies 
$$\bigcap_{i=1}^3 B(x_i,r_i)\neq\emptyset~\text{ and }~B(x_i,r_i)\cap J\neq\emptyset \text{~for~} i=1,2,3,$$
then
$$\bigcap_{i=1}^3B(x_i,r_i+\varepsilon)\cap J\neq\emptyset \text{~for every~} \varepsilon>0.$$
\item (The restricted 3-ball property) For every $y_1,y_2,y_3\in B_J$, $x\in B_X$ and $\varepsilon>0$, there is $y\in J$ such that
$$\|x+y_i-y\|\leq1+\varepsilon \text{~for~}  i=1,2,3.$$
\end{enumerate}
\end{theorem}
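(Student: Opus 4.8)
The plan is to establish the cycle $(1)\Rightarrow(2)\Rightarrow(3)\Rightarrow(1)$; this is the classical 3-ball characterization of M-ideals, and I would follow the route of \cite[Ch.~I]{HWW}. The two ``3-ball'' conditions are essentially equivalent by soft arguments: for $(2)\Rightarrow(3)$, given $y_1,y_2,y_3\in B_J$, $x\in B_X$ and $\varepsilon>0$, I apply $(2)$ to the balls $B(x+y_i,1)$, which contain the common point $x$ and each meet $J$ at $y_i$; the point $(2)$ delivers is exactly the one required by $(3)$. The reverse implication $(3)\Rightarrow(2)$, needed only to close the cycle, is a routine rescaling-and-translation reduction of an admissible triple of balls to the normalized form appearing in $(3)$.

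For $(1)\Rightarrow(2)$ I would pass to the bidual. If $J$ is an M-ideal with L-projection $P$ on $X^{*}$ of range $J^{\perp}$, then $\pi:=\Id-P^{*}$ is a norm-one M-projection of $X^{**}$ onto $J^{\perp\perp}$, fixing $J$ pointwise, so $X^{**}=J^{\perp\perp}\oplus_{\infty}\operatorname{ran}P^{*}$. Given admissible balls $B(x_i,r_i)$ with a common point $z$ and with $j_i\in B(x_i,r_i)\cap J$, the $\ell_\infty$-splitting of $x_i=\pi(x_i)+P^{*}(x_i)$ together with $P^{*}(j_i)=0$ yields
\[
\|x_i-\pi(z)\|=\max\{\|\pi(x_i-z)\|,\|P^{*}(x_i-j_i)\|\}\le\max\{\|x_i-z\|,\|x_i-j_i\|\}\le r_i,
\]
so $\pi(z)\in J^{\perp\perp}$ already belongs to all three bidual balls. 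The remaining — and genuinely delicate — step is the descent from $J^{\perp\perp}$ back to $J$: I would use that $B_J$ is weak$^*$-dense in $B_{J^{\perp\perp}}$ and couple this with a local-reflexivity argument to produce a single $y\in J$ with $\|x_i-y\|\le r_i+\varepsilon$ for $i=1,2,3$ simultaneously.

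For $(3)\Rightarrow(1)$ I would first bootstrap $(2)$, by induction on the number of balls, to the ``$n$-ball property'': every finite family of admissible balls has a common point in $J$ after an $\varepsilon$-enlargement. I would then use this to build the L-projection: for each $\phi\in X^{*}$ the $n$-ball property forces an approximately norm-additive splitting $\phi=\phi_0+\phi_1$ with $\phi_0\in J^{\perp}$ and $\|\phi\|\ge\|\phi_0\|+\|\phi_1\|-\varepsilon$, and a limiting/uniqueness argument turns the assignment $\phi\mapsto\phi_0$ into a genuine L-projection of $X^{*}$ onto $J^{\perp}$, i.e.\ $X^{*}=J^{*}\oplus_{1}J^{\perp}$, which is condition $(1)$.

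The main obstacle, shared by both substantive implications, is exactly this transfer of ball-intersection data between $X$ and its bidual — equivalently, the reconstruction of the L-projection from purely metric $n$-ball information. Weak$^*$-approximation is only lower semicontinuous for the norm, so it cannot on its own preserve the three distance constraints, and it has to be coupled with local reflexivity (dually, with the full strength of the $n$-ball property); this is the one point where the hypothesis is used at full force.
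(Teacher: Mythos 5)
First, a point of reference: the paper does not prove this theorem at all --- it is quoted verbatim from \cite[I.2.~Theorem~2.2]{HWW} as background, so there is no in-paper argument to compare yours against. Judged on its own terms, your outline of the two easier implications is sound and matches the standard route: the computation $\|x_i-\pi(z)\|=\max\{\|\pi(x_i-z)\|,\|P^*(x_i-j_i)\|\}\le r_i$ is correct (using $P^*j_i=0$ for $j_i\in J\subset J^{\perp\perp}=\ker P^*$), and the derivation of $(3)$ from $(2)$ via the balls $B(x+y_i,1)$, which contain $x$ and meet $J$ at $y_i$, is exactly right. For the descent from $J^{\perp\perp}$ to $J$ you correctly identify that weak$^*$-density alone does not preserve the three norm constraints; the standard repair is a Hahn--Banach separation in $X^3$ between $\prod_i \intt B(0,r_i+\varepsilon)$ and the set $\{(x_1-y,x_2-y,x_3-y):y\in J\}$, evaluated against the weak$^*$-limit coming from $\pi(z)$, which is cleaner than invoking local reflexivity --- but at least the issue is named.

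The genuine gap is in $(3)\Rightarrow(1)$, which is the entire content of the theorem, and your proposal does not contain an argument for it. The phrase ``a limiting/uniqueness argument turns the assignment $\phi\mapsto\phi_0$ into a genuine L-projection'' is a description of the conclusion, not a proof: the existence of approximately norm-additive splittings $\phi=\phi_0+\phi_1$, $\phi_0\in J^{\perp}$, says nothing by itself (take $\phi_0=0$), and even with the correct normalization $\|\phi_1\|=\|\phi|_J\|$ the crux is the \emph{additivity} of $\phi\mapsto\phi_0$, which does not follow from any generic limiting argument --- this is precisely where \cite{HWW} (following Alfsen--Effros and Lima) has to work, via a direct computation with the restricted 3-ball property applied to carefully chosen triples. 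Two subsidiary claims are also wrong as stated: the implication $(3)\Rightarrow(2)$ is \emph{not} a routine rescaling-and-translation (the three radii differ and the common point of the balls bears no normalized relation to the $j_i$; in \cite{HWW} this implication is obtained only by routing through $(1)$), and likewise the induction from the 3-ball to the $n$-ball property is not routine and is avoided in \cite{HWW} by deriving the $n$-ball property from $(1)$ rather than from $(3)$. Fortunately neither claim is needed for your cycle $(1)\Rightarrow(2)\Rightarrow(3)\Rightarrow(1)$, so the one thing that must be supplied is a real proof that the restricted 3-ball property forces $J^{\perp}$ to be an L-summand.
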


\begin{example}\label{basicexample}For a sequence of finite dimensional spaces $\{F_i\}$, let $X_1$ be $\ell_p(F_i)$ for $p\in (1,\infty)$ and $X_2$ be  $c_0(F_i)$,  the $\ell_p$-sum and the $c_0$-sum of $\{F_i\}$, respectively. For each $k\in\{1,2\}$, if $n(X_k)>0$, then  $\mathcal{K}_v(X_k)$ is an M-ideal in $\mathcal{L}_v(X_k)$.
\end{example}

\begin{proof} 
 In order to show that $\mathcal{K}_v(X_k)$ is an M-ideal, we prove that it has the restricted 3-ball property of Theorem \ref{3ball}. 

Let $\varepsilon>0$, $S_1, S_2, S_3\in B_{\mathcal{K}_v(X_k)}$ and $T\in B_{\mathcal{L}_v(X_k)}$ be given. For the canonical projection $P_j\in\mathcal{L}(X_k)$ to the first $j$th-coordinates,  there is $n\in\mathbb{N}$ such that $v(P_nS_iP_n-S_i)<\varepsilon$ for each $i=1,2,3$. Indeed, since the sequences $\{P_j\}$ and $\{P_j^*\}$ converge strongly to the corresponding identity operators respectively, for each compact operator $K$ there is $m\in\mathbb{N}$ such that
\begin{align*}
v(P_jKP_j-K)&\leq\|P_jKP_j-K\|\\
&\leq\|P_jKP_j-P_jK\|+\|P_jK-K\|\\
&=\|(id_{X_k}-P_j)^*K^*P_j^*\|+\|(id_{X_k}-P_j)K\|<\varepsilon
\end{align*}
 for every $j\geq m$. 

 We now claim that
$$v(T+P_nS_iP_n-(P_nT+TP_n-P_nTP_n))\leq 1.$$

For $\overline{P_n}=id_{X_k}-P_n$, it holds that 
$$v(T+P_nS_iP_n-(P_nT+TP_n-P_nTP_n))=v(\overline{P_n}T\overline{P_n}+P_nS_iP_n),$$
and that 
$$x^*(\overline{P_n}x)+x^*(P_nx)=1$$
for each pair $(x^*,x)\in\Pi(X_k)$. Moreover, we have 
$$\|\overline{P_n}^*x^*\|\|\overline{P_n}x\|=1-\lambda \text{~and~} \|P_n^*x^*\|\|P_nx\|=\lambda$$ for $\lambda=x^*(P_nx)$ from the H\"{o}lder's inequality. 
Indeed, we have
\begin{align*}
1&=x^*(\overline{P_n}x)+x^*(P_nx)\\
&=x^*(\overline{P_n}\, \overline{P_n}x)+x^*(P_nP_nx)\\
&=\overline{P_n}^*x^*(\overline{P_n}x)+P_n^*x^*(P_nx)\\
&\leq\|\overline{P_n}^*x^*\|\|\overline{P_n}x\|+\|P_n^*x^*\|\|P_nx\|.
\end{align*}

For $k=1$, we see that \begin{align*}
\|\overline{P_n}^*x^*\|\|\overline{P_n}x\|+\|P_n^*x^*\|\|P_nx\|&\leq \|(\|\overline{P_n}^*x^*\|,\|P_n^*x^*\|)\|_{p^*}\|(\|\overline{P_n}x\|,\|P_nx\|)\|_p=\|x^*\|\|x\|=1
\end{align*}
where $\|(\cdot,\cdot)\|_s$ for $s\in [1,\infty)$ is the $s$-norm on $\mathbb{R}^2$ and $p^*$ is the conjugate of $p$.

For $k=2$, we see that 
\begin{align*}
\|\overline{P_n}^*x^*\|\|\overline{P_n}x\|+\|P_n^*x^*\|\|P_nx\|&\leq \|(\|\overline{P_n}^*x^*\|,\|P_n^*x^*\|)\|_{1}\|(\|\overline{P_n}x\|,\|P_nx\|)\|_\infty=\|x^*\|\|x\|=1
\end{align*}
where $\|(\cdot,\cdot)\|_\infty$ is the supremum norm on $\mathbb{R}^2$. Hence, we get the desired equalities. 

These show that, whenever $0<\lambda<1$, 
\begin{align*}
&|x^*(\overline{P_n}T\overline{P_n}x)|=(1-\lambda)\left|\frac{\overline{P_n}^*x^*}{\|\overline{P_n}^*x^*\|}\left(T\frac{\overline{P_n}x}{\|\overline{P_n}x\|}\right)\right|\leq(1-\lambda) v(T)\leq1-\lambda \quad \text{and}\\
&|x^*\left(P_nS_iP_nx\right)|=\lambda\left|\frac{P^*_nx^*}{\|P^*_nx^*\|}\left(S_i\frac{P_nx}{\|P_nx\|}\right)\right|\leq\lambda v(S_i)\leq \lambda.
\end{align*}

Hence, we deduce 
$$|x^*\left((\overline{P_n}T\overline{P_n}+P_nS_iP_n)x\right)|\leq1.$$

When it holds $\lambda=0$ or $\lambda=1$, it is easier to get the same inequality. Consequently, we obtain
$$v(T+P_nS_iP_n-(P_nT+TP_n-P_nTP_n))=v(\overline{P_n}T\overline{P_n}+P_nS_iP_n)\leq1.$$

Therefore, we see
\begin{align*}
v(T+S_i-&(P_nT+TP_n-P_nTP_n))\\
&\leq v(T+P_nS_iP_n-(P_nT+TP_n-P_nTP_n))+v(P_nS_iP_n-S_i)\\
&<1+\varepsilon
\end{align*}
which completes the proof. 
\end{proof}

 It is known that $n(\ell_p)=0$ only if $p=2$ and $\ell_p$ is a real Banach space (see \cite{MMP}). Hence, $\mathcal{K}_v(\ell_p)$ is an M-ideal in $\mathcal{L}_v(\ell_p)$ for $p\in (1,\infty)$ in the complex case and for $p\in (1,2)\cup(2,\infty)$ in the real case.
 For a Banach space $X\in \{c_0,~\ell_1,~\ell_\infty\}$, it is well known that $n(X)=1$ (see \cite{MP}) which implies $\mathcal{L}(X)$ and $\mathcal{K}(X)$ are isometric to $\mathcal{L}_v(X)$ and $\mathcal{K}_v(X)$, respectively. Since $\mathcal{K}(X)$ is an M-ideal in $\mathcal{L}(X)$ only if $X=c_0$ (see \cite[Chap. VI]{HWW}), the same statements hold for the numerical radius. It is worth to note that, for $c_0(F_i)$ of Example \ref{basicexample}, the equality $n(c_0(F_i))=\inf_i n(F_i)$ holds (see \cite{MP}). Hence, the fact that $\mathcal{K}_v(c_0(F_i))$ is an M-ideal whenever $n(c_0(F_i))>0$ can not be obtained from the classical results of M-ideals of compact operators endowed with the operator norm.

To see the properness of M-ideals in Example \ref{basicexample}, we give the numerical radius version of \cite[VI.4. Proposition 4.3]{HWW}.

\begin{prop}
For a Banach space $X$ such that $n(X)>0$, if $\mathcal{K}_v(X)$ is an M-summand in $\mathcal{L}_v(X)$, then $\mathcal{K}_v(X)=\mathcal{L}_v(X)$.
\end{prop}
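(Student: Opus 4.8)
The plan is to follow the classical argument for \cite[VI.4.~Proposition~4.3]{HWW}: if $\mathcal{K}_v(X)$ were a proper M-summand, its M-complement would contain a nonzero operator $T$ that a single rank-one perturbation pushes past the $\ell_\infty$-bound $\max\{v(T),v(K)\}$, a contradiction.

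First I would argue by contradiction. Suppose $\mathcal{K}_v(X)\neq\mathcal{L}_v(X)$ and write the M-summand decomposition $\mathcal{L}_v(X)=\mathcal{K}_v(X)\oplus_\infty Y$, where $Y$ is then a nonzero closed subspace. Since $n(X)>0$, the numerical radius $v$ is a genuine norm on $\mathcal{L}_v(X)$, so I may pick $T\in Y$ with $v(T)=1$. By the definition of $v$ there is a pair $(x^*,x)\in\Pi(X)$ with $c:=|x^*(Tx)|>0$.

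Next I would introduce the perturbation $K\in\mathcal{K}_v(X)$ given by the rank-one operator $Ky=\lambda\,x^*(y)\,x$, where $\lambda\in\mathbb{T}$ is chosen so that $\lambda\,|x^*(Tx)|=x^*(Tx)$, i.e.\ $\lambda$ has the same argument as $x^*(Tx)$. On one hand, $v(K)\le\|K\|=\|x^*\|\,\|x\|=1$. On the other hand, using $x^*(x)=1$,
$$|x^*((T+K)x)|=|x^*(Tx)+\lambda\, x^*(x)^2|=|x^*(Tx)+\lambda|=c+1,$$
the last equality holding because $\lambda$ and $x^*(Tx)$ are positive scalar multiples of one another; hence $v(T+K)\ge c+1>1$. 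Since $T\in Y$ and $K\in\mathcal{K}_v(X)$, the M-summand identity gives $v(T+K)=\max\{v(T),v(K)\}\le 1$, a contradiction. Therefore $Y=\{0\}$ and $\mathcal{K}_v(X)=\mathcal{L}_v(X)$.

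I do not expect a genuine obstacle: once the decomposition is written down, the proof is a one-line computation. The only point that deserves a word of care is the choice of the phase $\lambda$ so that the value of $K$ at $(x^*,x)$ adds in modulus to that of $T$; because $x^*(x)^2=1$ this works verbatim in both the real case (where $\lambda=\pm1$ according to the sign of $x^*(Tx)$) and the complex case.
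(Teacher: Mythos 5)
Your proof is correct and follows essentially the same route as the paper: take a unit-numerical-radius operator in the M-complement and add a rank-one operator built from a pair in $\Pi(X)$, phased so the values add, to violate the $\ell_\infty$-decomposition. The only (harmless) difference is that the paper chooses a pair nearly attaining $v(T)$ to get $v(T+K)\geq 2-\varepsilon$, whereas you observe that any pair with $x^*(Tx)\neq 0$ already suffices.
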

\begin{proof}
Assume that $\mathcal{L}_v(X)=\mathcal{K}_v(X)\oplus_\infty \mathcal{R}$ for a non-trivial subspace $\mathcal{R}$ of $\mathcal{L}_v(X)$. Let $T$ be an operator in ${\mathcal{R}}$ whose numerical radius is $1$, and $\varepsilon\in (0,1)$ be given. For a pair $(x^*_0,x_0)\in\Pi(X)$ such that $|x^*_0(Tx_0)|\geq 1-\varepsilon$, take a number $\lambda$ whose modulus is $1$ such that $\lambda x_0^*(Tx_0)=|x_0^*(Tx_0)|$. 

It is obvious that $K=\bar{\lambda} x_0^*\otimes x_0$ is a compact operator whose numerical radius is $1$. Hence, we get $v(T+K)=1$ from the assumption. However, we have $$v(T+K)\geq|x_0^*\left((T+K)x_0\right)|\geq2-\varepsilon$$ which is a contradiction.
\end{proof}

$~$

\subsection{M-embeddings}~

We say that a Banach space $X$ is an M-embedded space if $X$ is an M-ideal in $X^{**}$, and many spaces like $c_0$, the little Bloch space $B_0$, and the space of compact operators on a Hilbert space are known to be M-embedded (see \cite[III.1. Example 1.4]{HWW}). Especially, it is known that whenever $X$ is a reflexive Banach space if $\mathcal{K}(X)$ is an M-ideal in $\mathcal{L}(X)$, then $\mathcal{K}(X)^{**}=\mathcal{L}(X)$ which implies $\mathcal{K}(X)$ is M-embedded (see \cite[VI.4. Proposition 4.11]{HWW}). In the present section, we observe the same result for the case of $\mathcal{K}_v(X)$. In order to prove the analogue of \cite[VI.4. Proposition 4.11]{HWW}, we show the existence of a shrinking compact approximation for the identity on the numerical radius setting.

For an operator $T\in\mathcal{L}(X)$, if a net $\{K_\alpha\}\in\mathcal{K}(X)$ converges strongly to $T$, then we say that $\{K_\alpha\}$ is a compact approximation for $T$. Additionally, if $K_\alpha^*$ also converges strongly to $T^*$, then $\{K_\alpha\}$ is said to be a shrinking compact approximation for $T$.

\begin{prop}\label{scap}
Let $X$ be a Banach space with $n(X)>0$. If $\mathcal{K}_v(X)$ is an M-ideal in $\mathcal{L}_v(X)$, then there is a shrinking compact approximation $\{K_\alpha\}\subset B_{\mathcal{K}_v(X)}$ for the identity map $id_X$.
\end{prop}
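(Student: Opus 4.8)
The plan is to reinterpret the statement algebraically: I would first prove that the M-ideal hypothesis forces $\mathcal{K}_v(X)$ to carry a two-sided approximate identity which is contractive for $v$, and then observe that any such approximate identity is automatically a shrinking compact approximation of $id_X$. Thus the argument splits into two parts, of which only the second is genuinely hard.

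For the easy part, suppose $(K_\alpha)\subset B_{\mathcal{K}_v(X)}$ satisfies $v(K_\alpha K-K)\to 0$ and $v(KK_\alpha-K)\to 0$ for every $K\in\mathcal{K}_v(X)$. Testing against rank-one operators $x^{*}\otimes x$ (with $x^{*}\neq 0$ and $x\neq 0$), and using the identity $(x^{*}\otimes x)K_\alpha=(K_\alpha^{*}x^{*})\otimes x$ together with the elementary bound $v(y^{*}\otimes y)\geq n(X)\,\|y^{*}\|\,\|y\|$ — this is exactly where $n(X)>0$ is indispensable — one deduces $\|K_\alpha x-x\|\to 0$ and $\|K_\alpha^{*}x^{*}-x^{*}\|\to 0$ for all $x\in X$ and $x^{*}\in X^{*}$; that is, $K_\alpha\to id_X$ and $K_\alpha^{*}\to id_{X^{*}}$ strongly. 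This part is routine.

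For the hard part, I would produce the approximate identity by adapting to the numerical radius the classical chain of ideas showing that an M-ideal of compact operators is rich in compact approximations of the identity: by Oja's separable determination of M-ideals of compact operators \cite{Oja1} one may reduce to $X$ separable, and then the Harmand--Lima theorem \cite{HL} yields, in the operator-norm setting, the metric compact approximation property of $X^{*}$ and a contractive shrinking compact approximation of $id_X$. Since $n(X)>0$ makes $v$ equivalent to $\|\cdot\|$ (so that $v(ST)\leq\|S\|\,\|T\|\leq\|S\|\,v(T)/n(X)$, and $\mathcal{K}_v(X)$ is a boundedly-multiplicative two-sided ideal of $\mathcal{L}_v(X)$), I would re-run that construction with the restricted $3$-ball property of Theorem \ref{3ball} — applied to $id_X$, noting $v(id_X)=1$, exactly in the style of the proof of Example \ref{basicexample} — serving as the geometric engine that keeps the numerical radius of every approximant at most $1$. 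The main obstacle is precisely this last step: the operator-norm proofs lean on submultiplicativity of $\mathcal{L}(X)$ and on the Kalton--Werner characterization of M-ideals of compact operators, neither of which is available verbatim for the numerical radius, so one must either verify that only boundedness of the multiplication (with constants controlled by $n(X)$) is ever used, or else show that ``$\mathcal{K}_v(X)$ is an M-ideal in $\mathcal{L}_v(X)$'' already transfers enough of the Harmand--Lima--Oja machinery to deliver the metric compact approximation property of $X^{*}$, and hence the desired net.
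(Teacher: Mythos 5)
There is a genuine gap: the ``hard part'' of your plan --- actually producing the contractive (for $v$) two-sided approximate identity, or equivalently the net itself --- is never carried out. You yourself flag it as ``the main obstacle'' and leave open whether the Harmand--Lima--Oja machinery (separable determination, metric compact approximation property of $X^{*}$, the Kalton--Werner characterization) transfers to the numerical radius setting. Since that is precisely the content of the proposition, the proposal as written proves only the easy implication (a $v$-contractive approximate identity for $\mathcal{K}_v(X)$ is automatically a shrinking compact approximation of $id_X$, via $v(y^{*}\otimes y)\geq n(X)\|y^{*}\|\|y\|$ --- that computation is fine), and not the existence statement. Moreover, the proposed route is a detour into exactly the parts of the classical theory that are hardest to adapt: those arguments are tailored to the operator norm and to submultiplicativity, and nothing in your sketch identifies which of their steps survive when $\|\cdot\|$ is replaced by $v$.

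The paper's proof needs none of this. It invokes the general M-ideal fact (\cite[I.1.\ Remark 1.13]{HWW}) that for any M-ideal $J$ in $Y$ the ball $B_J$ is $\sigma(Y,J^{*})$-dense in $B_Y$; applied to $J=\mathcal{K}_v(X)$ in $Y=\mathcal{L}_v(X)$ and to the element $id_X$ (which lies in $B_{\mathcal{L}_v(X)}$ because $v(id_X)=1$), this immediately yields a net $\{L_\beta\}\subset B_{\mathcal{K}_v(X)}$ with $L_\beta\to id_X$ in the $\sigma(\mathcal{L}_v(X),\mathcal{K}_v(X)^{*})$-topology. The hypothesis $n(X)>0$ enters only to guarantee that $v$ is equivalent to the operator norm, so that each functional $x^{**}\otimes x^{*}$ is bounded on $\mathcal{K}_v(X)$ and hence $L_\beta^{*}\to id_{X^{*}}$ in the weak operator topology. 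A standard Mazur-type argument (WOT-closures and SOT-closures of convex sets coincide), applied twice over a suitable directed set of pairs (neighborhood, index), upgrades this to strong convergence of both $K_\alpha$ and $K_\alpha^{*}$ while keeping the net in $B_{\mathcal{K}_v(X)}$. If you want to salvage your write-up, replace the entire ``hard part'' by this density-plus-convex-combination argument; your ``easy part'' then becomes unnecessary.
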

\begin{proof}
This is an analogy of \cite[VI.4. Proposition 4.10]{HWW}, but we give details for the completeness. Note that $B_{\mathcal{K}_v(X)}$ is $\sigma(\mathcal{L}_v(X),\mathcal{K}_v(X)^*)$-dense in $B_{\mathcal{L}_v(X)}$ (see  \cite[I.1. Remark 1.13]{HWW}). Since $v(id_X)=1$ and the functional $x^{**}\otimes x^*$ for $x^{**}\in X^{**}$ and $x^*\in X^*$ defined by $x^{**}\otimes x^*(K)= x^{**}(K^*x^*)$ for each $K\in \mathcal{K}_v(X)$ belongs to $\mathcal{K}_v(X)^*$, there is a net $\{L_\beta\}_{\beta\in I}\subset B_{\mathcal{K}_v(X)}$ of compact operators such that $L_\beta^*$ converges to $id_{X^*}$ in the weak operator topology. After passing to the convex combination, we obtain the desired shrinking compact approximation. Indeed, define a directed set $J$ by
$$J=\{\gamma=(U,\beta)~:~U\text{~is a SOT-open neighborhood of~}id_{X^*},\beta\in I\}$$
with the canonical order $\prec$ defined by $(U_1,\beta_1)\prec(U_2,\beta_2)$ whenever $U_1\supseteq U_2$ and $\beta_1\leq\beta_2$. For the convenience, here we denote the weak operator topology and the strong operator topology by WOT and SOT. It is well known that WOT-closure of a convex subset of $\mathcal{L}(X)$ coincides with its SOT-closure. Hence, $id_{X^*}\in \overline{\text{conv}\{L_\beta^*~:~\beta \geq \beta_0\}}^{\text{WOT}}=\overline{\text{conv}\{L_\beta^*~:~\beta \geq \beta_0\}}^{\text{SOT}}$ for every $\beta_0\in I$. Take $T_{\gamma_0}$ for each $\gamma_0=(U_0,\beta_0)\in J$ such that
$$T_{\gamma_0}^*\in \text{conv}\{L_\beta^*~:~\beta\geq\beta_0\}\cap U_0.$$
It is obvious that $T_{\gamma}^*$ converges to $id_{X^*}$ in SOT, and this implies $T_\gamma$ converges to $id_X$ in WOT. By repeating the same argument for $\{T_\gamma\}$, we obtain the desired net $\{K_\alpha\}$.
\end{proof}

The following corollary is the numerical radius version of \cite[VI.4. Proposition 4.11]{HWW}. We omit the proof since it can be proved by the same way using Proposition \ref{scap}.

\begin{cor}\label{Mebd}
Let $X$ be a reflexive Banach space with $n(X)>0$. If $\mathcal{K}_v(X)$ is an M-ideal in $\mathcal{L}_v(X)$, then $\mathcal{K}_v(X)^{**}=\mathcal{L}_v(X)$. In particular, $\mathcal{K}_v(X)$ is M-embedded.
\end{cor}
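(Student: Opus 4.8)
This is the numerical‑radius counterpart of \cite[VI.4.~Proposition~4.11]{HWW}, and the plan is to transplant its proof, the new ingredient being the shrinking compact approximation of the identity furnished by Proposition \ref{scap}. I would begin by producing a canonical map. Since $\mathcal{K}_v(X)$ is an M‑ideal in $\mathcal{L}_v(X)$ we have $\mathcal{L}_v(X)^*=\mathcal{K}_v(X)^*\oplus_1\mathcal{K}_v(X)^\perp$; let $W$ denote the $\oplus_1$‑summand isometric to $\mathcal{K}_v(X)^*$ via restriction to $\mathcal{K}_v(X)$. Dualizing the inclusion $W\hookrightarrow\mathcal{L}_v(X)^*$ and composing with the canonical embedding of $\mathcal{L}_v(X)$ into its bidual yields a norm‑one operator $\iota\colon\mathcal{L}_v(X)\to\mathcal{K}_v(X)^{**}$, $\iota(T)(\varphi)=\varphi(T)$ for $\varphi\in W$, which restricts to the identity on $\mathcal{K}_v(X)$. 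The conclusion $\mathcal{K}_v(X)^{**}=\mathcal{L}_v(X)$ is precisely the assertion that $\iota$ is a surjective isometry, and M‑embeddedness then follows at once since $\mathcal{K}_v(X)$ is an M‑ideal in $\mathcal{L}_v(X)=\mathcal{K}_v(X)^{**}$.

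That $\iota$ is an isometry is the easy half. Using reflexivity, for $x\in X$ and $x^*\in X^*$ the functional $x\otimes x^*$ from Proposition \ref{scap} acts on all of $\mathcal{L}_v(X)$ by $x\otimes x^*(S)=x^*(Sx)$. When $(x^*,x)\in\Pi(X)$ one has $\|x\otimes x^*\|_{\mathcal{L}_v(X)^*}=1$, since $|x^*(Sx)|\le v(S)$ for every $S$ while $v(id_X)=x^*(x)=1$, and the restriction $x\otimes x^*|_{\mathcal{K}_v(X)}$ also has norm $1$, witnessed by the rank‑one operator $x^*\otimes x\in B_{\mathcal{K}_v(X)}$; hence the $\mathcal{K}_v(X)^\perp$‑component of $x\otimes x^*$ vanishes, i.e.\ $x\otimes x^*\in B_W$. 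Since $v(T)=\sup\{|x^*(Tx)|:(x^*,x)\in\Pi(X)\}$, these functionals norm $\mathcal{L}_v(X)$, so $\|\iota(T)\|\ge v(T)$, and together with $\|\iota\|\le 1$ this gives $\|\iota(T)\|=v(T)$.

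For surjectivity, take $\Theta\in\mathcal{K}_v(X)^{**}$. The bilinear form $(x,x^*)\mapsto\Theta(x\otimes x^*)$ on $X\times X^*$ is bounded (as $n(X)>0$ makes $v$ equivalent to the operator norm, so $\|x\otimes x^*\|_{\mathcal{K}_v(X)^*}$ is controlled by $\|x\|\,\|x^*\|$), and by reflexivity it is represented by an operator $T\in\mathcal{L}(X)=\mathcal{L}_v(X)$ with $\langle x^*,Tx\rangle=\Theta(x\otimes x^*)$; for $(x^*,x)\in\Pi(X)$ one then gets $\iota(T)(x\otimes x^*)=(x\otimes x^*)(T)=\langle x^*,Tx\rangle=\Theta(x\otimes x^*)$, so $\iota(T)$ and $\Theta$ agree on $\{x\otimes x^*:(x^*,x)\in\Pi(X)\}$. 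To conclude $\iota(T)=\Theta$ one needs this family to span a norm‑dense subspace of $\mathcal{K}_v(X)^*$ (and, since $n(X)>0$, this is no loss over asking that all the $x\otimes x^*$ span densely, because a functional in $\mathcal{K}_v(X)^{**}$ annihilating the $\Pi(X)$‑tensors is represented by an operator of numerical radius $0$, hence the zero operator, hence annihilates all $x\otimes x^*$). This density is exactly where Proposition \ref{scap} does its work: the shrinking compact approximation of the identity forces $X$ and $X^*$ to have the relevant compact approximation property, as in \cite{HL, Oja1}, and for the reflexive space $X$ this makes the elementary tensor functionals dense in $\mathcal{K}_v(X)^*=\mathcal{K}(X)^*$. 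I expect this last point — the approximation‑theoretic input behind the surjectivity of $\iota$, transplanted from \cite[VI.4]{HWW} — to be the main obstacle; the construction of $\iota$ and the verification that it is an isometry use only the M‑ideal splitting of $\mathcal{L}_v(X)^*$ and the defining formula for the numerical radius.
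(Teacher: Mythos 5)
Your argument is correct and is essentially the transplantation of \cite[VI.4, Proposition 4.11]{HWW} that the paper itself intends (the paper omits the proof, deferring to that result together with Proposition \ref{scap}): the canonical map $\iota$ built from the $\ell_1$-decomposition of $\mathcal{L}_v(X)^*$, the observation that the $\Pi(X)$-tensors lie in the $L$-summand and norm $\mathcal{L}_v(X)$, and the bilinear-form representation of elements of $\mathcal{K}_v(X)^{**}$ are exactly the right ingredients. The one point worth adjusting is your justification of the density of the elementary tensor functionals in $\mathcal{K}_v(X)^*=\mathcal{K}(X)^*$: the compact approximation property alone does not obviously give this, whereas the Feder--Saphar theorem \cite{FS} (invoked by the paper in Corollary \ref{WerCor}) yields it directly for reflexive $X$ from the Radon--Nikod\'ym property of $X^*$, with no approximation hypothesis needed.
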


A Banach space $X$ is said to have the compact approximation property (CAP, in short) if the identity map can be approximated by compact operators in the strong operator topology. For a constant $\lambda>0$, if $X$ has the CAP with the approximation for the identity bounded by $\lambda$, then we say that $X$ has the $\lambda$-compact approximation property ($\lambda$-CAP, in short). 

Theorem \ref{tfae0}, the analogy for an equivalent norm $N$ of the equivalence in Theorem \ref{eqvalence11}, shows that if $\mathcal{K}_N(X)$ is an M-ideal in $\mathcal{L}_N(X)$, then $X$ has the $\lambda$-CAP for some $\lambda>0$. In Proposition \ref{scap}, we show that if $\mathcal{K}_v(X)$ is an M-ideal in $\mathcal{L}_v(X)$, then $X$ and $X^*$ both have the $n(X)^{-1}$-CAP whenever $n(X)>0$. 

\begin{cor}Let $X$ be a Banach space with $n(X)>0$. If $\mathcal{K}_v(X)$ is an M-ideal in $\mathcal{L}_v(X)$, then $X$ and $X^*$ have the $n(X)^{-1}$-CAP. 
\end{cor}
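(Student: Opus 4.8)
The plan is to read off both statements directly from the shrinking compact approximation produced in Proposition \ref{scap}, using only the elementary comparison between the numerical radius and the operator norm. First I would recall that when $n(X)>0$ one has
$$n(X)\|T\|\leq v(T)\leq\|T\| \quad\text{for every } T\in\mathcal{L}(X).$$
The right-hand inequality is immediate from the definition of $v$, since $|x^*Tx|\leq\|x^*\|\,\|T\|\,\|x\|=\|T\|$ for every $(x^*,x)\in\Pi(X)$; the left-hand one is just the definition of $n(X)$ together with homogeneity of $v$. In particular, $v(T)\leq1$ forces $\|T\|\leq n(X)^{-1}$.

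Next I would invoke Proposition \ref{scap}: since $\mathcal{K}_v(X)$ is an M-ideal in $\mathcal{L}_v(X)$ and $n(X)>0$, there is a net $\{K_\alpha\}\subset B_{\mathcal{K}_v(X)}$ of compact operators with $K_\alpha\to id_X$ and $K_\alpha^*\to id_{X^*}$ in the strong operator topology. Each $K_\alpha$ satisfies $v(K_\alpha)\leq1$, hence $\|K_\alpha\|\leq n(X)^{-1}$ by the preceding inequality. Thus $\{K_\alpha\}$ is a net of compact operators, uniformly bounded by $n(X)^{-1}$ in the operator norm, converging to $id_X$ in SOT; by the definition recalled before the statement, this is precisely the assertion that $X$ has the $n(X)^{-1}$-CAP.

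Finally, for $X^*$ I would pass to adjoints: each $K_\alpha^*$ is compact by Schauder's theorem and $\|K_\alpha^*\|=\|K_\alpha\|\leq n(X)^{-1}$, while $K_\alpha^*\to id_{X^*}$ in SOT by the shrinking property of the net. Hence $\{K_\alpha^*\}$ witnesses the $n(X)^{-1}$-CAP for $X^*$, completing the proof. There is no substantial obstacle here once Proposition \ref{scap} is available; the only points requiring care are using the numerical index inequality in the correct direction, so that a uniform bound on the numerical radii of the $K_\alpha$ translates into a uniform bound on their operator norms, and observing that taking adjoints preserves compactness and the operator-norm bound.
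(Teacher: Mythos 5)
Your proposal is correct and is exactly the argument the paper intends (the corollary is stated without proof, being read off from Proposition \ref{scap} together with the inequality $n(X)\|T\|\leq v(T)\leq\|T\|$, which converts the bound $v(K_\alpha)\leq 1$ into $\|K_\alpha\|\leq n(X)^{-1}$). The use of the shrinking property and Schauder's theorem to handle $X^*$ is likewise the intended step.
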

Note that the well-known Enflo-Davie subspace $Z$ of $\ell_p$ for $2<p<\infty$ (or of $c_0$) does not have the CAP (see \cite{LT}). Therefore, by Proposition \ref{scap}, $\mathcal{K}_v(Z)$ cannot be an M-ideal in $\mathcal{L}_v(Z)$.

~

\subsection{The characterization of M-ideals and the numerical index}~

In order to give the aforementioned characterization of the M-ideal of compact operators, the analogy of Theorem \ref{eqvalence11}, we define the essential-$N$-norm $N_e(T)$ of $T$ for the equivalent norm $N$ on $\mathcal{L}(X)$ as follows. 
$$N_e(T)=\inf_{K\in\mathcal{K}_N(X)}N(T-K).$$

\begin{theorem}\label{tfae0}
For a Banach space $X$ and a norm $N$ on $\mathcal{L}(X)$ which is equivalent to the operator norm, the following are equivalent.
\begin{enumerate}
\item $\mathcal{K}_N(X)$ is an M-ideal in $\mathcal{L}_N(X)$.
\item For all $T\in\mathcal{L}_N(X)$, there is a bounded shrinking compact approximation $\{K_\alpha\}$ of $T$ such that
\begin{align*}
\limsup_\alpha N(S+T-K_\alpha)\leq\max\{N(S),N_e(S)+N(T)\} \text{~for every~} S\in\mathcal{L}_N(X).
\end{align*} 
\item For all $T\in\mathcal{L}_N(X)$, there is a bounded shrinking compact approximation $\{K_\alpha\}$ of $T$ such that
\begin{align*}
\limsup_\alpha N(S+T-K_\alpha)\leq\max\{N(S),N(T)\} \text{~for every~} S\in\mathcal{K}_N(X).
\end{align*}
\end{enumerate}
\end{theorem}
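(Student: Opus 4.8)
The plan is to imitate Werner's proof of Theorem \ref{eqvalence11} \cite{Wer}, with the operator norm replaced by $N$ throughout and with the M-ideal characterization of Theorem \ref{3ball} doing the geometric work. The implications $(2)\Rightarrow(3)$ and $(3)\Rightarrow(1)$ are short; the substance lies in $(1)\Rightarrow(2)$.

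For $(2)\Rightarrow(3)$, note that $S\in\mathcal{K}_N(X)$ forces $N_e(S)=0$ (take $K=S$ in the infimum defining $N_e$), so $\max\{N(S),N_e(S)+N(T)\}=\max\{N(S),N(T)\}$ and the estimate in (2) specializes to the one in (3) for the same net. For $(3)\Rightarrow(1)$, I would verify the restricted 3-ball property of Theorem \ref{3ball}(3): given $y_1,y_2,y_3\in B_{\mathcal{K}_N(X)}$, $x\in B_{\mathcal{L}_N(X)}$ and $\varepsilon>0$, apply (3) to $T:=x$ to obtain a bounded shrinking compact approximation $\{K_\alpha\}$ of $T$ with $\limsup_\alpha N(y_i+T-K_\alpha)\le\max\{N(y_i),N(T)\}\le1$ for $i=1,2,3$; as there are finitely many indices, some $\alpha_0$ satisfies $N(y_i+T-K_{\alpha_0})<1+\varepsilon$ simultaneously, and $y:=K_{\alpha_0}\in\mathcal{K}_N(X)$ works.

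For $(1)\Rightarrow(2)$, I would first --- exactly as in the proof of Proposition \ref{scap}, using that $B_{\mathcal{K}_N(X)}$ is $\sigma(\mathcal{L}_N(X),\mathcal{K}_N(X)^*)$-dense in $B_{\mathcal{L}_N(X)}$, that each $x^{**}\otimes x^*$ lies in $\mathcal{K}_N(X)^*$, and a convex-combination argument --- extract a bounded net $\{A_\beta\}\subset\mathcal{K}_N(X)$ with $A_\beta\to id_X$ and $A_\beta^*\to id_{X^*}$ strongly. Then fix $T\in\mathcal{L}_N(X)$ and build the net of (2) indexed by $\gamma=(\varepsilon,F,\Phi)$, where $\varepsilon>0$, $F\subset\mathcal{L}_N(X)$ is finite, and $\Phi$ is a finite set of functionals $K\mapsto x^*(Kx)$; for each $\gamma$ I must produce $K_\gamma\in\mathcal{K}_N(X)$ with $N(S+T-K_\gamma)\le\max\{N(S),N_e(S)+N(T)\}+\varepsilon$ for all $S\in F$, with $|\phi(K_\gamma)-\phi(T)|<\varepsilon$ for all $\phi\in\Phi$, and with $N(K_\gamma)$ bounded by a constant depending only on $T$ and the equivalence constants. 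The norm estimates come from the $n$-ball property of the M-ideal $\mathcal{K}_N(X)$ in $\mathcal{L}_N(X)$ (a standard consequence of Theorem \ref{3ball}, cf. \cite{HWW}): writing $F=\{S_1,\dots,S_n\}$ and $r_i=\max\{N(S_i),N_e(S_i)+N(T)\}$, one has $T\in\bigcap_{i}B_N(S_i+T,r_i)$, while $N_e(S_i+T)\le N_e(S_i)+N(T)\le r_i$ shows that, after enlarging the radii by any $\delta>0$, each ball $B_N(S_i+T,r_i+\delta)$ meets $\mathcal{K}_N(X)$; the $n$-ball property then produces a compact operator in the intersection of the slightly enlarged balls. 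To force, in addition, the weak closeness $|\phi(K_\gamma)-\phi(T)|<\varepsilon$ --- i.e. $K_\gamma\to T$ in the weak operator topology along the net --- one carries out this ball argument together with the weak-star density of $B_{\mathcal{K}_N(X)}$ in $B_{\mathcal{L}_N(X)}$, if necessary after compressing by some $A_\beta$ which is the identity on the finitely many vectors involved; this simultaneous step is Werner's key maneuver transplanted to $N$. The resulting net is bounded, satisfies the required $\limsup$-bound for every $S\in\mathcal{L}_N(X)$, and converges to $T$ in the weak operator topology; passing to convex combinations and iterating as in Proposition \ref{scap} upgrades this to strong convergence of both $K_\gamma$ and $K_\gamma^*$ without disturbing the bounds, since $\sum_{j}\lambda_j(S+T-K_{\gamma_j})=S+T-\sum_{j}\lambda_j K_{\gamma_j}$ and $N$ is convex. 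This yields the bounded shrinking compact approximation demanded in (2).

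The main obstacle is the simultaneous step in $(1)\Rightarrow(2)$: fitting the finitely many norm-ball constraints (responsible for the $\max\{N(S),N_e(S)+N(T)\}$ bound) and the finitely many weak constraints (pushing $K_\gamma$ toward $T$) into a single compact operator. This is exactly where the M-structure of $\mathcal{K}_N(X)$ in $\mathcal{L}_N(X)$, not merely the existence of a compact approximating net, is used, and it is the content of Werner's key lemma. A secondary, purely bookkeeping point is that $N$ need not be submultiplicative, so the boundedness of the nets and assertions like ``$A_\beta$ close to $id_X$ on a finite set makes $A_\beta K$ close to $K$'' are controlled through the equivalence constants between $N$ and the operator norm; compactness, strong and weak operator convergence, and the density of $B_{\mathcal{K}_N(X)}$ in $B_{\mathcal{L}_N(X)}$ are unaffected by the renorming, so Werner's scheme transfers.
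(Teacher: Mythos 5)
Your proposal is correct and follows essentially the same route as the paper, which simply invokes Werner's argument for \cite[Theorem 3.1]{Wer} with the operator norm replaced by $N$, obtains boundedness from \cite[Proposition 2.3 and Lemma 2.4]{Wer}, and upgrades to a shrinking approximation by the convex-combination argument of Proposition \ref{scap} --- exactly the three ingredients you identify. Your sketch is in fact more detailed than the paper's proof, which is omitted except for these remarks.
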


\begin{proof}
Since the equivalence is obtained by the similar arguement in \cite[Theorem 3.1]{Wer}, we omit the proof. We just mention that the condition `boundedness' of the net $(K_\alpha)$ is followed by \cite[Proposition 2.3 and Lemma 2.4]{Wer}, and the condition `shrinking' of the net $(K_\alpha)$ can be obtained by the convex combination arguement in the proof of Proposition \ref{scap}.
\end{proof}

In fact, in the proof of Example \ref{basicexample}, it is proved that $\mathcal{L}_v(\ell_p(F_i))$ and $\mathcal{L}_v(c_0(F_i))$  satisfy the condition (3) of Theorem \ref{tfae0}. Indeed, for each $T\in \mathcal{L}_v(\ell_p(F_i))$ (or $\mathcal{L}_v(c_0(F_i))$) and the canonical projection $P_j$ from $\ell_p(F_i)$ to the first $j$th-coordinates, a sequence $\{K_n\}$ defined by $K_n=P_nT+TP_n-P_nTP_n$ is a shrinking compact approximation for $T$ such that
$$\limsup_{n\rightarrow \infty} v(S+T-K_n)\leq\max\{v(S),v(T)\} \text{~for every~} S\in\mathcal{K}_v(\ell_p).$$

In \cite{K1}, Kalton introduced property $(M)$ in order to characterize a separable Banach space $X$ such that $\mathcal{K}(X)$ is an M-ideal in $\mathcal{L}(X)$. The characterization for general Banach spaces is also given in terms of the net-version of property $(M)$ as follows (see \cite[Chap. VI]{HWW}).

 We say that a Banach space $X$ has property $(M)$ if $(x,y)\in X\times X$ satisfies $\|x\|\leq\|y\|$, then $$\limsup_\beta\|x+x_\beta\|\leq \limsup_\beta\|y+x_\beta\|$$
for every bounded weakly-null net $\{x_\beta\}$ in $X$. We say that a Banach space $X$ has property $(M^*)$ if $(x^*,y^*)\in X^*\times X^*$ satisfies $\|x^*\|\leq\|y^*\|$, then $$\limsup_\beta\|x^*+x^*_\beta\|\leq \limsup_\beta\|y^*+x^*_\beta\|$$
for every bounded weakly-$*$-null net $\{x^*_\beta\}$ in $X^*$.

For an infinite dimensional Banach space $X$, it is known that $\mathcal{K}(X)$ is an M-ideal in $\mathcal{L}(X)$ if and only if $X$ has property $(M)$ (can be replaced by property $(M^*)$) and there is a shrinking compact approximation $\{K_\alpha\}$ for the identity such that $\lim_\alpha \|id_X-2K_\alpha\|=1$. We show that if $\mathcal{K}_v(X)$ is an M-ideal in $\mathcal{L}_v(X)$, then $X$ satisfies some weaker conditions which are similar to properties $(M)$ and $(M^*)$.

\begin{prop}\label{Mineq}
Let $X$ be an infinite dimensional Banach space with $n(X)>0$. If $\mathcal{K}_v(X)$ is an M-ideal in $\mathcal{L}_v(X)$, then the following holds.
\begin{enumerate}
\item If $(x,y)\in X\times X$ satisfies $\|x\|\leq\|y\|$, then
$$n(X)\limsup_\beta\|x+x_\beta\|\leq \limsup_\beta\|y+x_\beta\|$$
for every bounded weakly-null net $\{x_\beta\}$ in $X$.
\item If $(x^*,y^*)\in X^*\times X^*$ satisfies $\|x^*\|\leq\|y^*\|$, then
$$n(X)\limsup_\beta\|x^*+x^*_\beta\|\leq \limsup_\beta\|y^*+x^*_\beta\|$$
for every bounded weakly-$*$-null net $\{x^*_\beta\}$ in $X^*$.
\end{enumerate}
\end{prop}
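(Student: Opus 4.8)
The plan is to derive both inequalities from the $3$-ball property (Theorem \ref{3ball}) applied to rank-one perturbations of the identity, in the spirit of the proof that property $(M)$ follows when $\mathcal{K}(X)$ is an M-ideal. I will prove (1) in detail; part (2) follows by a verbatim dual argument, using that $\mathcal{K}_v(X)$ being an M-ideal in $\mathcal{L}_v(X)$ passes to the adjoint picture (the relevant functionals $x^{**}\otimes x^*$ separate $\mathcal{K}_v(X)$, as already used in Proposition \ref{scap}), and that $n(X^*)\geq n(X)$ is not even needed since we only want the factor $n(X)$.

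First I would fix $(x,y)\in X\times X$ with $\|x\|\le\|y\|$, a bounded weakly-null net $\{x_\beta\}$, and normalize so that $\|y\|\le 1$. The key is to feed the right operators into the restricted $3$-ball property of Theorem \ref{3ball}(3) for the M-ideal $\mathcal{K}_v(X)$ inside $\mathcal{L}_v(X)$. Pick $(z^*,z)\in\Pi(X)$ with $|z^*(x)|$ close to $\|x\|$; after a unimodular rotation assume $z^*(x)=\|x\|$, and set up the rank-one operator $R=z^*\otimes x\in\mathcal{K}_v(X)$ and, more relevantly, the operators that shift a fixed unit vector onto $x$ versus onto $y$. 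The cleanest route: choose a norm-one functional and build operators $S_i$ and $T$ so that the combination $T+S_i-K_\alpha$ occurring in condition (3) of Theorem \ref{tfae0} (which, by the remark after that theorem, here takes the concrete form $K_n=P_nT+TP_n-P_nTP_n$ only in the special case; in general we just invoke the M-ideal) evaluates, against a suitable element of $\Pi(X)$ involving $x_\beta$, to something comparable to $\|x+x_\beta\|$ on one side and is bounded by $\max\{v(S_i),v(T)\}$ on the other. Since the numerical radius controls the operator norm from below by the factor $n(X)$, i.e. $n(X)\|U\|\le v(U)$ for all $U\in\mathcal{L}(X)$, a bound of the form $v(\cdot)\le C$ yields $\|\cdot\|\le C/n(X)$, and it is precisely this passage that produces the constant $n(X)$ in the statement.

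Concretely, I expect the argument to run as follows. Using the M-ideal property, for the operator $T:=\,$(a norm-one rank-one operator carrying mass onto $y$) there is a shrinking compact approximation $\{K_\alpha\}$ with $\limsup_\alpha v(S+T-K_\alpha)\le\max\{v(S),v(T)\}$ for all $S\in\mathcal{K}_v(X)$; take $S$ to be the analogous rank-one operator associated with $x$. Then for each $\beta$ evaluate the operator $S+T-K_\alpha$ at a point $u_\beta\in S_X$ close to the direction of $x+x_\beta$, paired with a supporting functional, exploiting that $x_\beta\to 0$ weakly to kill the cross terms involving $K_\alpha$ (strong convergence of $K_\alpha$ and $K_\alpha^*$) in the limit; this lower-bounds the left side by roughly $\|x+x_\beta\|$ up to $\eps$, while the right side is $\max\{v(S),v(T)\}\le\max\{\|x\|,\|y\|\}\cdot\|\cdot\|\le 1$ after normalization, or more precisely is bounded using $\|x\|\le\|y\|$. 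Rearranging and dividing by $n(X)$ (to convert the numerical-radius bound on $S+T-K_\alpha$ into a norm bound that can be evaluated on a single vector) gives $n(X)\limsup_\beta\|x+x_\beta\|\le\limsup_\beta\|y+x_\beta\|+\eps$, and letting $\eps\to 0$ finishes it.

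The main obstacle will be the bookkeeping in the middle step: arranging the rank-one (or finite-rank) operators $S$ and $T$ so that, when evaluated against a unit pair adapted to $x+x_\beta$, the numerical-radius quantity genuinely dominates $\|x+x_\beta\|$ and not merely some smaller projection of it, while simultaneously keeping $v(S)$ and $v(T)$ controlled by $\|x\|$ and $\|y\|$ respectively. This is where the weak-nullity of $\{x_\beta\}$ and the strong convergence of the compact approximants must be combined carefully, and where a clean choice of the auxiliary vectors and functionals — likely normalizing $x_\beta$ away from the span of $x,y$ up to small error, or passing to a subnet — is essential. Everything else (the reduction to (2) by duality, the limiting arguments, the $\eps$-management) is routine.
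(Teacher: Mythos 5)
Your overall strategy is the right one and matches the paper's: invoke the quantitative approximation property of Theorem \ref{tfae0}(3) to get $\limsup_\alpha v(S+T-K_\alpha)\le\max\{v(S),v(T)\}$, convert this numerical-radius bound into an operator-norm bound via $n(X)\|U\|\le v(U)$ (this is indeed exactly where the factor $n(X)$ comes from), and then evaluate the resulting operator at (a normalization of) $y+x_\beta$ to dominate $\|x+x_\beta\|$. However, there is a concrete error in your instantiation: you take $T$ to be \emph{a norm-one rank-one operator carrying mass onto $y$}. If $T$ is compact, then $T-K_\alpha\to 0$ strongly, and moreover every operator in sight ($S$, $T$, $K_\alpha$) maps the bounded weakly-null net $\{x_\beta\}$ to a norm-null net; consequently $(S+T-K_\alpha)(y+x_\beta)$ loses the tail $x_\beta$ entirely in the limit over $\beta$, and there is no way to lower-bound $\|S+T-K_\alpha\|$ by anything resembling $\|x+x_\beta\|/\|y+x_\beta\|$. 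The correct choice — and the one the paper makes — is $T=id_X$ (so that $(id_X-K_\alpha)x_\beta\approx x_\beta$ for fixed $\alpha$, while $(id_X-K_\alpha)y\to 0$ in $\alpha$) together with the rank-one $S=y^*\otimes x$ suitably normalized so that $\|S\|\le 1$ using $\|x\|\le\|y\|$; then $(S+id_X-K_\alpha)\bigl(y+x_\beta\bigr)\approx x+x_\beta$ up to errors controlled by $\varepsilon$, $\|(id_X-K_\alpha)y\|$, and the weak-nullity of $\{x_\beta\}$.

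Beyond this, the step you defer as ``bookkeeping'' is in fact the substantive content of the proof, and your proposal does not carry it out; in particular you never specify the functionals defining $S$, never verify $v(S)\le\|S\|\le 1$ from $\|x\|\le\|y\|$, and never isolate the three error terms that must be sent to zero (in $\beta$, then $\alpha$, then $\varepsilon$). For the dual statement (2), which is the one the paper writes out in full, the same scheme works with $(S+id_X-K_\alpha)^*$ evaluated at $y^*+x^*_\beta$; here you additionally need that adjoints of compact operators send bounded weak-$*$-null nets to norm-null nets and that the approximation is shrinking — you do mention the shrinking property, so that part of your outline is sound. As written, though, the proposal is a plan with a wrong key choice rather than a proof.
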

\begin{proof} 

 Since the proofs of (1) and (2) are very similar, we only give the one for (2). Without loss of generality, we assume that $y^*\neq 0$. By Theorem \ref{tfae0}, there is a shrinking compact approximation $\{K_\alpha\}$ for the identity $id_X$ such that 
$$\limsup_\alpha v(S+id_X-K_\alpha)\leq1\text{~for every~}S\in B_{\mathcal{K}_v(X)}.$$

For $\varepsilon\in (0,1)$, take $x\in \|y^*\|^{-1}B_{X}$ such that $y^*(x)>1-\varepsilon$. Then, for any $\alpha$, we have
\begin{align*}
\limsup_\beta\|x^*+x^*_\beta\|&\leq \limsup_\beta\|y^*(x) x^*+x^*_\beta\|+|1-y^*(x)|\|x^*\|\\
&\leq \limsup_\beta\|(y^*+x^*_\beta)(x) x^*+(id^*_X-K^*_\alpha)x^*_\beta\|+\varepsilon\|x^*\|\\
&\leq \limsup_\beta\|(x\otimes x^*)(y^*+x^*_\beta)+(id^*_X-K^*_\alpha)(y^*+x^*_\beta)\|+\|(id^*_X-K^*_\alpha)y\|+\varepsilon\|x^*\|\\
&\leq\|x\otimes x^*+id^*_X-K^*_\alpha\|\limsup_\beta\|y^*+x^*_\beta\|+\|(id^*_X-K^*_\alpha)y^*\|+\varepsilon\|x^*\|\\
&=\|x^*\otimes x+id_X-K_\alpha\|\limsup_\beta\|y^*+x^*_\beta\|+\|(id^*_X-K^*_\alpha)y^*\|+\varepsilon\|x^*\|\\
&\leq n(X)^{-1}v(x^*\otimes x+id_X-K_\alpha)\limsup_\beta\|y^*+x^*_\beta\|+\|(id^*_X-K^*_\alpha)y^*\|+\varepsilon\|x^*\|.
\end{align*}
Since $x^*\otimes x$ is a compact operator on $X$ and it satisfies $v(x^*\otimes x)\leq\|x^*\otimes x\|\leq1$, we have the following.
\begin{align*}
\limsup_\beta\|x^*+x^*_\beta\|\leq  n(X)^{-1}\limsup_\beta\|y^*+x^*_\beta\|+\|(id^*_X-K^*_\alpha)y^*\|+\varepsilon\|x^*\|.
\end{align*}
Since $\alpha$ and $\varepsilon$ are chosen arbitrarily, we get 
\begin{align*}
\limsup_\beta\|x^*+x^*_\beta\|\leq  n(X)^{-1}\limsup_\beta\|y^*+x^*_\beta\|.
\end{align*}
\end{proof}

It is worth to recall property $M(r,s)$ in \cite{CN} given for $r,s\in (0,1]$ which is a generalization of property $(M)$. We say that a Banach space $X$ has property $M(r,s)$ if $(x,y)\in X\times X$ satisfies $\|x\|\leq\|y\|$, then $$\limsup_\beta\|rx+sx_\beta\|\leq \limsup_\beta\|y+x_\beta\|$$ for every bounded weakly-null net $\{x_\beta\}$ in $X$. We say that a Banach space $X$ has property $M^*(r,s)$ if $(x^*,y^*)\in X^*\times X^*$ satisfies $\|x^*\|\leq\|y^*\|$, then $$\limsup_\beta\|rx^*+sx^*_\beta\|\leq \limsup_\beta\|y^*+x^*_\beta\|$$for every bounded weakly-$*$-null net $\{x^*_\beta\}$ in $X^*$. These are generalizations of properties $(M)$ and $(M^*)$, and introduced to investigate $M(r,s)$-ideals of compact operators. They share similar features with properties $(M)$ and $(M^*)$ (see \cite{HJO} for details), and we list a few facts. First, if $X$ has property $M^*(r,s)$, then it has property $M(r,s)$ and satisfies the $M(r,s)$-inequality (see \cite{HJO} and \cite{O} respectively). Secondly, if $X$ satisfies the $M(r,s)$-inequality for some  $r,s\in (0,1]$ such that $r+s>1$, then $X$ is an Asplund space \cite{CNO}. 

Since the conditions (1) and (2) of Proposition \ref{Mineq} are the same with properties $M(n(X),n(X))$ and $M^*(n(X),n(X))$ respectively, we see that (2) implies (1). Moreover, we have the following.

\begin{cor}\label{RNPMideal} Let $X$ be a Banach space with $n(X)>1/2$. If $\mathcal{K}_v(X)$ is an M-ideal in $\mathcal{L}_v(X)$, then $X$ is Asplund. In particular, if $X$ is a Banach space containing an isomorphic copy of $\ell_1$ such that $n(X)>1/2$, then $\mathcal{K}_v(X)$ is not an M-ideal in $\mathcal{L}_v(X)$.
\end{cor}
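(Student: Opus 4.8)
The plan is to deduce Corollary~\ref{RNPMideal} directly from Proposition~\ref{Mineq} together with the two facts recalled just before the statement. First I would observe that by Proposition~\ref{Mineq}(2), the hypothesis $\mathcal{K}_v(X)$ being an M-ideal in $\mathcal{L}_v(X)$ with $n(X)>0$ forces $X$ to satisfy property $M^*(n(X),n(X))$ in the sense recalled above, i.e.\ with $r=s=n(X)$. Since the paper has already noted that property $M^*(r,s)$ implies the $M(r,s)$-inequality (citing \cite{HJO} and \cite{O}), we get that $X$ satisfies the $M(n(X),n(X))$-inequality. Now the assumption $n(X)>1/2$ gives $r+s = 2n(X) > 1$, so the cited result of \cite{CNO} --- that any Banach space satisfying the $M(r,s)$-inequality with $r+s>1$ is Asplund --- applies verbatim and yields that $X$ is Asplund.

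For the second assertion I would argue by contraposition. Suppose $X$ contains an isomorphic copy of $\ell_1$. Then $X$ is not Asplund: a Banach space is Asplund if and only if every separable subspace has separable dual, and a copy of $\ell_1$ has non-separable dual, so no space containing $\ell_1$ can be Asplund. Combining this with the first part: if in addition $n(X)>1/2$ and $\mathcal{K}_v(X)$ were an M-ideal in $\mathcal{L}_v(X)$, the first part would force $X$ to be Asplund, a contradiction. Hence $\mathcal{K}_v(X)$ is not an M-ideal in $\mathcal{L}_v(X)$.

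The write-up is essentially a bookkeeping exercise in chaining implications, so there is no serious obstacle; the only point requiring a little care is matching the exact formulation of the properties. Specifically, one must check that the inequality $n(X)\limsup_\beta\|x^*+x_\beta^*\| \le \limsup_\beta\|y^*+x_\beta^*\|$ from Proposition~\ref{Mineq}(2), which is stated with the scalar on the left-hand side multiplying the whole sum $x^*+x_\beta^*$, indeed coincides with property $M^*(n(X),n(X))$ as defined (scalars $r$ on $x^*$ and $s$ on $x_\beta^*$ separately, with $\limsup_\beta\|rx^*+sx_\beta^*\|$ on the left). These agree precisely because $r=s$, so $rx^*+sx_\beta^* = n(X)(x^*+x_\beta^*)$; this is exactly the observation the paper makes in the sentence "Since the conditions (1) and (2) of Proposition~\ref{Mineq} are the same with properties $M(n(X),n(X))$ and $M^*(n(X),n(X))$ respectively". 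So the corollary follows immediately, and I would present it in two or three sentences invoking Proposition~\ref{Mineq}(2), the implication to the $M(r,s)$-inequality, the Asplundness criterion of \cite{CNO}, and the standard fact that spaces containing $\ell_1$ are not Asplund.
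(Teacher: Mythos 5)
Your proposal is correct and matches the paper's (implicit) argument exactly: the paper derives the corollary from Proposition~\ref{Mineq}(2) read as property $M^*(n(X),n(X))$, the chain $M^*(r,s)\Rightarrow M(r,s)\text{-inequality}\Rightarrow$ Asplund for $r+s>1$ via \cite{HJO}, \cite{O}, \cite{CNO}, and the fact that spaces containing $\ell_1$ are not Asplund. Your care in checking that the single-scalar formulation coincides with the $r=s$ case of $M^*(r,s)$ is precisely the identification the paper makes in the sentence preceding the corollary.
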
 

We don't know whether the same statement in Corollary \ref{RNPMideal} holds or not for a Banach space $X$ with $n(X)>0$, but the version of the operator norm is well known. In \cite{L2}, it was firstly proved that if $X$ is a Banach space containing an isomorphic copy of $\ell_1$, then $\mathcal{K}(X)$ is not an M-ideal in $\mathcal{L}(X)$, and later, it was shown that if $\mathcal{K}(X)$ is an M-ideal in $\mathcal{L}(X)$, then $X$ is Asplund (see \cite[Corollary 4.5]{HWW}). 

\begin{question} Let $X$ be a Banach space with $n(X)>0$. Is $X$ Asplund whenever $\mathcal{K}_v(X)$ is an M-ideal in $\mathcal{L}_v(X)$?
\end{question}

To investigate an inifinite dimensional Banach space $X$ such that $\mathcal{K}_v(X\oplus_\infty X)$ or $\mathcal{K}_v(X\oplus_1 X)$ is an M-ideal, we recall the modulus  $\bar{\delta}_X$ of asymptotic uniform convexity and the modulus $\bar{\rho}_X$ of asymptotic uniform smoothness defined for $t\in[0,\infty)$ as

\begin{align*}
\bar{\delta}_X(t)&= \inf_{x\in S_X} \bar{\delta}_X(x,t) \text{~where~} \bar{\delta}_X(x,t)=\sup_{\dim(X/Y)<\infty}\inf_{y\in Y,\|y\|\geq t}\lVert x+y\rVert-1\\
\bar{\rho}_X(t)&=\sup_{x\in S_X} \bar{\rho}_X(x,t) \text{~where~} \bar{\rho}_X(x,t)=\inf_{\dim(X/Y)<\infty}\sup_{y\in Y,\|y\|\leq t}\lVert x+y\rVert-1.
\end{align*}

For a dual space $X$, the different moduli $\bar{\delta}^*_X$ and $\bar{\rho}^*_X$ are considered by taking all finite-codimensional weak-$*$-closed subspaces $Y$ of $X$. These moduli had been firstly introduced in \cite{Mil}, and it is investigated intensively. We refer the reader to \cite{JLDS} for fruitful results on these moduli, and we present only the well-known inequalities for further use (see, for example, \cite[Proposition 2.2]{GP}).

$$ \lVert x\rVert+\lVert x\rVert\bar{\delta}_X\left(\frac{\limsup_\alpha\lVert x_\alpha\rVert}{\lVert x\rVert}\right) \leq \limsup_\alpha\lVert x+x_\alpha\rVert \leq \lVert x\rVert+\lVert x\rVert\bar{\rho}_X\left(\frac{\limsup_\alpha\lVert x_\alpha\rVert}{\lVert x\rVert}\right)$$
for a non-zero vector $x\in X$ and a bounded weakly-null net $\{x_\alpha\}\subset X$. In case that $X$ is a dual space, the same inequalities hold for $\bar{\delta}^*_X$ and $\bar{\rho}^*_X$ with bounded weakly-$*$-null net $\{x_\alpha\}$.

It is worth to remark that $\mathcal{K}\left(X\oplus_1 X\right)$ is not an M-ideal for any infinite dimensional Banach space $X$, and $X$ has property $(m_\infty)$ whenever  $\mathcal{K}\left(X\oplus_\infty X\right)$ is an M-ideal (see \cite[Chap. VI]{HWW}). We say that $X$ has property $(m_\infty)$ if  $$\limsup_\beta\|x+x_\beta\|=\max \{\|x\|,\limsup_\beta\|x_\beta\|\}$$
for every $x\in X$ and every bounded weakly-null net $\{x_\beta\}\subset X$. 

\begin{cor}\label{modulus}
Let $X$ be an infinite dimensional Banach space such that $n(X)>0$.
\begin{enumerate}
\item If $\mathcal{K}_v(X\oplus_\infty X)$ is an M-ideal in $\mathcal{L}_v(X\oplus_\infty X)$, then it holds that 
$$n(X)\leq \min\left\{ \frac{1}{1+\bar{\delta}_X(1)},\frac{1+\bar{\rho}_{X^*}(1)}{2}\right\}.$$
\item If $\mathcal{K}_v(X\oplus_1 X)$ is an M-ideal in $\mathcal{L}_v(X\oplus_1 X)$, then it holds that 
$$n(X)\leq\min \left\{\frac{1+\bar{\rho}_X(1)}{2},\frac{1}{1+\bar{\delta}_{X^*}(1)}\right\}.$$
\end{enumerate}
\end{cor}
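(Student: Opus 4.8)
The plan is to derive all four inequalities by feeding suitable test vectors and bounded null nets into Proposition \ref{Mineq}, applied to $Y=X\oplus_\infty X$ and to $Y=X\oplus_1 X$, and then reading off the asymptotic moduli from the two modulus inequalities recalled just above the statement. Two preliminary remarks make this legitimate. First, since $X$ is infinite dimensional, so are $X^*$, $X\oplus_\infty X$ and $X\oplus_1 X$, and it is well known that $n(X\oplus_1 X)=n(X\oplus_\infty X)=n(X)$ (see, e.g., \cite{MP}), which is positive by hypothesis; hence, whenever $\mathcal{K}_v(Y)$ is an M-ideal in $\mathcal{L}_v(Y)$ for $Y\in\{X\oplus_\infty X,\,X\oplus_1 X\}$, Proposition \ref{Mineq} applies to $Y$ with the constant $n(Y)$ equal to $n(X)$. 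Second, in every infinite dimensional Banach space $Z$ the origin belongs to the weak closure of $S_Z$, so there is a bounded net $\{u_\beta\}\subset S_Z$ that is weakly null; I will use this for $Z=X$ and for $Z=X^*$, noting that a weakly null net in $X^*$ is a fortiori weak-$*$ null, so that a net of the form $\{(g_\beta,0)\}$ built from it is weak-$*$ null in $Y^*$ and part (2) of Proposition \ref{Mineq} can be invoked.

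I would spell out case (1), writing $Y=X\oplus_\infty X$ and using the isometric identification $Y^*=X^*\oplus_1 X^*$; case (2) is identical after interchanging the $\ell_1$- and $\ell_\infty$-sums and the roles of $X$ and $X^*$. For the bound $n(X)\le 1/(1+\bar\delta_X(1))$, fix $x_0\in S_X$ and a weakly null net $\{u_\beta\}\subset S_X$, so that $\{(0,u_\beta)\}$ is a bounded weakly null net in $Y$; since $\|(0,x_0)\|=\|(x_0,0)\|=1$, part (1) of Proposition \ref{Mineq} gives
\[
n(X)\,\limsup_\beta\bigl\|x_0+u_\beta\bigr\|=n(X)\,\limsup_\beta\bigl\|(0,x_0)+(0,u_\beta)\bigr\|\le\limsup_\beta\bigl\|(x_0,0)+(0,u_\beta)\bigr\|=1,
\]
while the lower modulus inequality forces $\limsup_\beta\|x_0+u_\beta\|\ge 1+\bar\delta_X(1)$, hence $n(X)(1+\bar\delta_X(1))\le 1$. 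For the bound $n(X)\le(1+\bar\rho_{X^*}(1))/2$, fix $f_0\in S_{X^*}$ and a weakly null net $\{f_\beta\}\subset S_{X^*}$, so that $\{(f_\beta,0)\}$ is a bounded weak-$*$ null net in $Y^*$; since $\|(0,f_0)\|_{Y^*}=\|(f_0,0)\|_{Y^*}=1$, part (2) of Proposition \ref{Mineq} gives
\[
2\,n(X)=n(X)\,\limsup_\beta\bigl\|(0,f_0)+(f_\beta,0)\bigr\|\le\limsup_\beta\bigl\|(f_0,0)+(f_\beta,0)\bigr\|=\limsup_\beta\bigl\|f_0+f_\beta\bigr\|,
\]
and the upper modulus inequality applied to the Banach space $X^*$ gives $\limsup_\beta\|f_0+f_\beta\|\le 1+\bar\rho_{X^*}(1)$. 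Combining the two displays yields (1). For (2), with $Y=X\oplus_1 X$ and $Y^*=X^*\oplus_\infty X^*$, one repeats the argument: Proposition \ref{Mineq}(1) with the net $\{(u_\beta,0)\}$ ($\{u_\beta\}\subset S_X$ weakly null) and the pair $(0,x_0),(x_0,0)$ yields $2\,n(X)\le\limsup_\beta\|x_0+u_\beta\|\le 1+\bar\rho_X(1)$, and Proposition \ref{Mineq}(2) with the net $\{(g_\beta,0)\}$ ($\{g_\beta\}\subset S_{X^*}$ weakly null) and the pair $(f_0,0),(0,f_0)$ yields $n(X)\limsup_\beta\|f_0+g_\beta\|\le 1$ together with $\limsup_\beta\|f_0+g_\beta\|\ge 1+\bar\delta_{X^*}(1)$.

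The arithmetic is routine, so the points that need attention are: recording $n(Y)=n(X)$ so that the constant in Proposition \ref{Mineq} is exactly $n(X)$; the fact that the corollary asks for the \emph{ordinary} moduli $\bar\delta_{X^*},\bar\rho_{X^*}$ of $X^*$ rather than their weak-$*$ versions, which is precisely why one must use genuinely weakly null (not merely weak-$*$ null) nets in $X^*$ while checking that these still induce weak-$*$ null nets in $Y^*$; and, in each of the four configurations, matching the orientation of Proposition \ref{Mineq} — the vector of smaller norm carries the factor $n(X)$ — against which test vector produces the large limsup ($1+\bar\delta$, or $2$) and which produces the small one ($1$, or $1+\bar\rho$). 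I do not expect any single serious obstacle beyond this bookkeeping.
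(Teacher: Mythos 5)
Your proof is correct and follows essentially the same route as the paper: apply Proposition \ref{Mineq} to $X\oplus_\infty X$ and $X\oplus_1 X$ (using $n(X\oplus_\infty X)=n(X\oplus_1 X)=n(X)$) with norm-one test vectors supported on different coordinates and coordinate-embedded null nets, then read off the asymptotic moduli from the two modulus inequalities. Your extra care in taking genuinely weakly null nets in $S_{X^*}$ (rather than merely weak-$*$-null ones, as the paper's proof literally does) is in fact what is needed to obtain the ordinary moduli $\bar{\rho}_{X^*}$ and $\bar{\delta}_{X^*}$ instead of their weak-$*$ variants, so on that point your write-up is slightly more precise than the paper's.
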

\begin{proof} We only prove (1) since the rest can be proved in a similar manner.

Note first that  $n(X\oplus_\infty X)=n(X)>0$ (see \cite[Proposition 1]{MP}). Since $X$ is infinite dimensional, there are a weakly-null net $\{x_\alpha\}\subset S_X$ and a weakly-$*$-null net $\{x^*_\beta\}\subset S_{X^*}$. Clearly, we see that $\|(x,x_\alpha)\|_\infty=1$ and $\|(x^*,x^*_\beta)\|_1=2$ for arbitrary $x\in S_X$ and $x^*\in S_{X^*}$ where $\|\cdot\|_\infty$ and $\|\cdot\|_1$ are norms on each direct sums $X\oplus_\infty X$ and $X^*\oplus_1 X^*$.

Hence, from Proposition \ref{Mineq}, we obtain
\begin{align*}
n(X)&\leq \frac{\limsup_\alpha \|(x,0)+(0,x_\alpha)\|_\infty}{\limsup_\alpha \|(0,x)+(0,x_\alpha)\|_\infty}\leq \frac{1}{1+\bar{\delta}_X(1)} \quad\text{~and~}\\
 n(X)&\leq \frac{\limsup_\alpha \|(0,x^*)+(0,x^*_\beta)\|_1}{\limsup_\alpha \|(x^*,0)+(0,x^*_\beta)\|_1}\leq \frac{1+\bar{\rho}_{X^*}(1)}{2}.
\end{align*}
\end{proof}

We don't know whether there is an infinite dimensional Banach space $X$ such that $\mathcal{K}_v\left(X\oplus_1 X\right)$ is an M-ideal or not, but we get the following consequence of Corollary \ref{modulus}.
\begin{example}
For a sequence of finite dimensional Banach spaces $\{F_i\}$ and a Banach space $X=c_0(F_i)$, if $\inf_in(F_i)>\frac{1}{2}$, then $\mathcal{K}_v\left(X\oplus_1 X\right)$ is not an M-ideal in $\mathcal{L}_v\left(X\oplus_1 X\right)$.
\end{example}
\begin{proof}  From the equality $\bar{\delta}_{X}(1)=\bar{\rho}_{X}(1)=0$ which is obvious, we get $n(X)\leq\frac{1}{2}$ whenever $\mathcal{K}_v\left(X\oplus_1 X\right)$ is an M-ideal by Corollary \ref{modulus}. This contradicts to $n(X)=\inf_in(F_i)>\frac{1}{2}$ (see \cite[Proposition 1]{MP}).
\end{proof}

$~$

\subsection{Proximinality and Farthest points}~

A subspace $J$ of $X$ is said to be proximinal in $X$ if for each element $x\in X$ there exists $j\in J$ such that $\|x-j\|=d(x,J)$.
Since it is well known that every M-ideal is proximinal, we see that $\mathcal{K}_v(\ell_p)$ is proximinal in $\mathcal{L}_v(\ell_p)$ for $p\in(1,\infty)$ whenever $n(\ell_p)>0$. 

In \cite[Corollary 3.2]{Wer}, the author derived a stronger result than the proximinality using the characterization (2) in Theorem \ref{eqvalence11} of a Banach space $X$ for which $\mathcal{K}(X)$ is an M-ideal in $\mathcal{L}(X)$. We obtain a similar observation in the setting of equivalent norms. 

\begin{cor}\label{WerCor}
Let $X$ be a Banach space such that at least one of $X^*$ and $X^{**}$ has the Radon-Nikod\'ym property, and $N$ be an equivalent norm on $\mathcal{L}(X)$. Suppose that $\mathcal{K}_N(X)$ is an M-ideal in $\mathcal{L}_N(X)$. If $T\in\mathcal{L}_N(X)$ and a net $\{T_\alpha\}\subset\mathcal{K}_N(X)$ satisfy that $T_\alpha^*$ converges to $T^*$ strongly, then there exists $K\in\overline{co}\{T_\alpha\}$ such that $N_e(T)=N(T-K)$.
\end{cor}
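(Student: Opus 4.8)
The plan is to adapt the argument of \cite[Corollary 3.2]{Wer} to the equivalent-norm setting, using Theorem \ref{tfae0}(2) in place of Theorem \ref{eqvalence11}(2), and then to invoke a Radon--Nikod\'ym/extreme-point selection argument to replace a net converging in the appropriate weak topology by a genuine norm-minimizer lying in the closed convex hull. First I would apply Theorem \ref{tfae0}(2) to the operator $T$: since $\mathcal{K}_N(X)$ is an M-ideal in $\mathcal{L}_N(X)$, there is a bounded shrinking compact approximation $\{K_\beta\}$ of $T$ with
\begin{align*}
\limsup_\beta N(S+T-K_\beta)\leq\max\{N(S),N_e(S)+N(T)\}\quad\text{for every }S\in\mathcal{L}_N(X).
\end{align*}
Applying this with $S=-T$ (for which $N_e(S)=N_e(T)$ and, since $\lim_\beta K_\beta^\ast\to T^\ast$ strongly forces $T-K_\beta$ to have norm bounded by comparison with $N_e(T)$ up to $\eps$, one checks $N(S)=N(T)$ is dominated), one gets $\limsup_\beta N(K_\beta)\le \max\{N(T), N_e(T)+N(T)\}$, but more to the point, testing against the given net, $\limsup_\beta N(T-K_\beta)\le N_e(T)+0 = N_e(T)$ once one arranges the net correctly; so $\{K_\beta\}$ is an $N_e(T)$-approximating net for $T$. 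The point is that the hypothesis ``$T_\alpha^\ast\to T^\ast$ strongly'' lets us merge the given net $\{T_\alpha\}$ with the net produced by Theorem \ref{tfae0}: passing to a convex-combination net as in the proof of Proposition \ref{scap}, we obtain a net $\{R_\gamma\}\subset\overline{\mathrm{co}}\{T_\alpha\}$ of compact operators with $R_\gamma^\ast\to T^\ast$ strongly and $\limsup_\gamma N(T-R_\gamma)\le N_e(T)$; by definition of $N_e$ the reverse inequality is automatic, so $\lim_\gamma N(T-R_\gamma)=N_e(T)$.

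Next I would upgrade this approximate minimizer to an exact one. Consider the set $C=\overline{\mathrm{co}}\{T_\alpha\}$; this is a bounded (by Theorem \ref{tfae0} the net $\{K_\beta\}$, hence $\{R_\gamma\}$, is bounded, and $N$ is equivalent to the operator norm), convex, closed subset of $\mathcal{L}_N(X)$ consisting of operators $R$ with $R^\ast$ a weak-operator limit of convex combinations of $T_\alpha^\ast$. The Radon--Nikod\'ym property enters exactly here: under the hypothesis that $X^\ast$ or $X^{\ast\ast}$ has the RNP, one knows (this is the mechanism behind \cite[Corollary 3.2]{Wer}, via the theory of M-ideals and the fact that the relevant set of operators is a bounded subset of a dual space with the RNP, or is weak-$\ast$ compact with RNP) that the infimum $\inf_{R\in C}N(T-R)$ is attained --- indeed the distance functional $R\mapsto N(T-R)$ is lower semicontinuous for a topology making $C$ compact (the weak-operator topology on $C$, which coincides with a weak-$\ast$ topology under reflexivity/RNP of the relevant dual), and RNP guarantees the face where it attains its infimum is nonempty, giving $K\in C=\overline{\mathrm{co}}\{T_\alpha\}$ with $N(T-K)=\inf_{R\in C}N(T-R)$. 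Combining with the previous paragraph, $N(T-K)\le\lim_\gamma N(T-R_\gamma)=N_e(T)$, and since $K\in\mathcal{K}_N(X)$ the reverse inequality $N(T-K)\ge N_e(T)$ holds trivially, so $N_e(T)=N(T-K)$.

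The main obstacle, and the step requiring the most care, is the attainment argument in the second paragraph: making precise in what topology $C=\overline{\mathrm{co}}\{T_\alpha\}$ is compact and $R\mapsto N(T-R)$ is lower semicontinuous, and verifying that the RNP hypothesis on $X^\ast$ or $X^{\ast\ast}$ is precisely what licenses this. One must be careful that the convex hull is taken in $\mathcal{L}_N(X)=\mathcal{L}(X)$ with an equivalent norm, so the geometry of balls changes, but compactness and lower semicontinuity are topological/convexity statements unaffected by passing to an equivalent norm; the content is entirely in reproducing Werner's RNP argument, where the set of compact operators adjoint-limiting to $T^\ast$ is identified with a bounded subset of a dual Banach space whose predual (or the space itself) has RNP, so that closed bounded convex subsets have strongly exposed points and distance functionals attain their infima on a nonempty face. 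Since Theorem \ref{tfae0} is the exact equivalent-norm analogue of the tool Werner used, and the RNP input is insensitive to replacing the operator norm by an equivalent one, the remainder of Werner's proof transfers verbatim, which is why no further detail is needed here.
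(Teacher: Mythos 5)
Your proposal does not follow the paper's route, and both of its main steps contain genuine gaps. The first gap is the claim that one can manufacture a net $\{R_\gamma\}\subset\overline{co}\{T_\alpha\}$ with $\limsup_\gamma N(T-R_\gamma)\le N_e(T)$ by ``merging'' the given net with the net $\{K_\beta\}$ supplied by Theorem~\ref{tfae0}: convex combinations involving the $K_\beta$ do not lie in $\overline{co}\{T_\alpha\}$, and the inequality $\limsup_\beta N(T-K_\beta)\le N_e(T)$ does not follow from condition (2) by the substitutions you indicate (with $S=0$ one only gets the bound $N(T)$; with $S=-T$ one gets a bound on $N(K_\beta)$, not on $N(T-K_\beta)$). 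Since this intermediate claim is essentially the whole content of the corollary, it cannot be waved through. The second gap is the ``upgrade to an exact minimizer'' via the Radon--Nikod\'ym property: the RNP is assumed for $X^*$ or $X^{**}$, not for $\mathcal{K}_N(X)$ or $\mathcal{L}_N(X)$; it does not make $\overline{co}\{T_\alpha\}$ compact in any topology for which $R\mapsto N(T-R)$ is lower semicontinuous; and even on a closed bounded convex subset of a space with the RNP, a continuous convex function need not attain its infimum. So neither half of your argument closes.

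The role the RNP hypothesis actually plays --- and this is the only point where the paper's proof departs from \cite[Corollary 3.2]{Wer} --- is different from the one you assign it. By the Feder--Shaphar theorem \cite{FS}, if $X^*$ or $X^{**}$ has the RNP then $\mathcal{K}(X)^*=\overline{\mathrm{span}}\{x^{**}\otimes x^*\}$, so the weak topology and the weak operator topology coincide on bounded subsets of $\mathcal{K}_N(X)$. Since $T_\alpha^*\to T^*$ strongly, every functional in the $\mathcal{K}_N(X)^*$-summand of the M-ideal decomposition $\mathcal{L}_N(X)^*=\mathcal{K}_N(X)^*\oplus_1\mathcal{K}_N(X)^\perp$ satisfies $\phi_1(T-T_\alpha)\to 0$, while the $\mathcal{K}_N(X)^\perp$-component of any norm-one functional contributes at most $N_e(T)$ to $|\phi(T-T_\alpha)|$ in the limit; a Hahn--Banach separation of the norm-closed convex set $\overline{co}\{T-T_\alpha\}$ from the ball of radius $N_e(T)$ then produces the desired $K$ in one stroke, with no separate attainment step. (In Werner's original setting the RNP of $X^*$ is automatic from the M-ideal hypothesis; for an equivalent norm $N$ this is not known, which is precisely why the paper adds it as a hypothesis.) To repair your write-up, replace both paragraphs by this separation argument.
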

\begin{proof}
Since the proof is almost the same with that of \cite[Corollary 3.2]{Wer}, we only mention the difference. In the proof of \cite[Corollary 3.2]{Wer}, it is used that the weak operator topology coincides with the weak topology on $\mathcal{K}(X)$. This is due to the facts that if $\mathcal{K}(X)$ is an M-ideal in $\mathcal{L}(X)$, then $X^*$ has the Radon-Nikod\'ym property (RNP, in short) (see \cite[VI.4 Corollary 4.5]{HWW}),  and that if $X^*$ (or $X^{**}$) has the RNP, then it holds that 
$$\mathcal{K}(X)^*=\overline{\text{span}}\{x^{**}\otimes x^*: x^{**}\in X^{**}, x^*\in X^*\} \text{ (see \cite[Theorem 1]{FS})}.$$
To the best of our knowledge, it is not known whether $X^*$ (or $X^{**}$) has the RNP or not when $\mathcal{K}_N(X)$ is an M-ideal in $\mathcal{L}_N(X)$ for an equivalent norm $N$ on $\mathcal{L}(X)$. Therefore, we need to assume that $X^*$ or $X^{**}$ has RNP to get the same result. The rest is the same with the proof  \cite[Corollary 3.2]{Wer}.
\end{proof}

Thanks to Corollary \ref{RNPMideal}, we get the following.
\begin{cor}\label{WerCor}
Let $X$ be a Banach space with $n(X)>\frac{1}{2}$ and suppose that $\mathcal{K}_v(X)$ is an M-ideal in $\mathcal{L}_v(X)$. If $T\in\mathcal{L}_v(X)$ and a net $\{T_\alpha\}\subset\mathcal{K}_v(X)$ satisfy that $T_\alpha^*$ converges to $T^*$ strongly, then there exists $K\in\overline{co}\{T_\alpha\}$ such that $v_e(T)=v(T-K)$.
\end{cor}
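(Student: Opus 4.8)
The plan is to obtain this statement as a direct specialization of the preceding, more general Corollary, whose hypotheses require that $X^*$ or $X^{**}$ have the Radon--Nikod\'ym property; essentially all that remains is to verify that this hypothesis is automatic here. First I would note that $v$ is an admissible choice of equivalent norm: since $n(X)>\frac12>0$, the numerical radius $v$ is not merely a seminorm but a genuine norm on $\mathcal{L}(X)$ equivalent to the operator norm, so the general Corollary may be applied with $N=v$, and with this choice $N_e$ is exactly the essential numerical radius $v_e$.

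Next I would invoke Corollary \ref{RNPMideal}: the two standing assumptions $n(X)>\frac12$ and ``$\mathcal{K}_v(X)$ is an M-ideal in $\mathcal{L}_v(X)$'' already force $X$ to be Asplund, which is equivalent to $X^*$ having the Radon--Nikod\'ym property. Hence the RNP hypothesis of the general Corollary is met with no extra work. Applying that Corollary to the given $T\in\mathcal{L}_v(X)$ and the net $\{T_\alpha\}\subset\mathcal{K}_v(X)$ with $T_\alpha^*\to T^*$ strongly then produces $K\in\overline{co}\{T_\alpha\}$ with $v_e(T)=v(T-K)$, which is precisely the assertion.

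The only substantive point in this argument is the passage through Corollary \ref{RNPMideal} to secure the Radon--Nikod\'ym property of $X^*$ from the numerical-radius M-ideal hypothesis; this is exactly the gap flagged in the remark inside the proof of the general Corollary, namely that it is not known whether $\mathcal{K}_v(X)$ being an M-ideal alone forces $X^*$ or $X^{**}$ to have the RNP. Once the stronger assumption $n(X)>\frac12$ is imposed, Corollary \ref{RNPMideal} removes this obstacle, and no new argument beyond the proof of the general Corollary is needed.
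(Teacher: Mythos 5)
Your proposal is correct and follows exactly the paper's route: the paper derives this corollary by combining Corollary~\ref{RNPMideal} (which yields that $X$ is Asplund, hence $X^*$ has the Radon--Nikod\'ym property) with the preceding general corollary for an equivalent norm $N$, specialized to $N=v$. Your added remarks that $v$ is a genuine equivalent norm when $n(X)>\tfrac12$ and that Asplundness of $X$ is equivalent to the RNP of $X^*$ are the right justifications and match the intended argument.
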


Let $X$ be a normed space with the norm $N$ and $C$ be a subset of $X$. An element $x\in X$ is said to have a farthest point $y\in C$ if it satisfies that 
$$N(x-y)=\rho_N(x,C)$$
where $\rho_N(x,C)=\sup\{N(x-z):z\in C\}$ which is  the farthest distance from $x$ to $C$ with respect to the norm $N$.

In \cite[Lemma 2.19 and Proposition 2.23]{BLR2}, the authors studied the existence of farthest points in $B_{\mathcal{K}(X)}$ using norm attaining operators. We get similar results with numerical radius attaining operators.

In order to do this, we get a form of an extreme point in $B_{\mathcal{K}_v(X)^*}$ using the following lemma.

\begin{lemma}\label{extremepoints}\cite[I.1 Lemma 1.5]{HWW}
If a closed subspace $J$ of a Banach space $X$ is an M-ideal, then it holds that 
$$\text{Ext} (B_{X^*})=\text{Ext} (B_{J^*})\cup \text{Ext} (B_{J^\perp}).$$
\end{lemma}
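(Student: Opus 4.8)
\emph{Proof proposal.} The plan is to reduce the statement to an elementary fact about $\ell_1$-sums and then transport it through the isometric identification of $X^*$ coming from the M-ideal structure. By definition, $J$ being an M-ideal provides an L-projection $P$ on $X^*$ with range $J^\perp$; its kernel $E:=\ker P$ satisfies $\|x^*\|=\|Px^*\|+\|x^*-Px^*\|$ for all $x^*\in X^*$, so $X^*=J^\perp\oplus_1 E$, and the restriction map $r\colon x^*\mapsto x^*|_J$ carries $E$ isometrically onto $J^*$ (this is the canonical identification meant by writing $X^*=J^*\oplus_1 J^\perp$). Since isometric isomorphisms preserve extreme points, it suffices to prove
$$\text{Ext}(B_{F\oplus_1 E})=\big(\text{Ext}(B_F)\times\{0\}\big)\cup\big(\{0\}\times\text{Ext}(B_E)\big)$$
for arbitrary Banach spaces $F,E$, and then apply it with $F=J^\perp$.

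First I would handle the forward inclusion. A point $(f,e)\in\text{Ext}(B_{F\oplus_1 E})$ has $\|f\|+\|e\|=1$; if $f\neq 0$ and $e\neq 0$, then with $s=\|f\|$, $t=\|e\|$ one has $(f,e)=s(f/s,0)+t(0,e/t)$, a proper convex combination of two distinct ball elements, which is impossible. So $e=0$ or $f=0$; say $e=0$. If $f=\tfrac12(f_1+f_2)$ with $f_i\in B_F$, then $(f,0)=\tfrac12\big((f_1,0)+(f_2,0)\big)$ forces $f_1=f_2=f$, so $f\in\text{Ext}(B_F)$. For the reverse inclusion, take $f\in\text{Ext}(B_F)$ and suppose $(f,0)=\tfrac12\big((f_1,e_1)+(f_2,e_2)\big)$ with $(f_i,e_i)\in B_{F\oplus_1 E}$. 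Then
$$1=\|f\|\le\tfrac12\big(\|f_1\|+\|f_2\|\big)\le\tfrac12\big(\|f_1\|+\|e_1\|+\|f_2\|+\|e_2\|\big)\le 1,$$
so $\|f_i\|=1$, $e_i=0$, and $f=\tfrac12(f_1+f_2)$ with $f_i\in B_F$ yields $f_1=f_2=f$; hence $(f,0)\in\text{Ext}(B_{F\oplus_1 E})$. The roles of $F$ and $E$ are symmetric.

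Translating back through $r$: an extreme point of $B_{X^*}$ either vanishes on $J$, in which case it lies in $\text{Ext}(B_{J^\perp})$, or it restricts to an element of $\text{Ext}(B_{J^*})$ (namely the unique element of $E$ identified with it by $r$); conversely every such functional is extreme in $B_{X^*}$. This is precisely $\text{Ext}(B_{X^*})=\text{Ext}(B_{J^*})\cup\text{Ext}(B_{J^\perp})$.

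The mathematical content here is genuinely just the one-line $\ell_1$-sum computation of the middle paragraph; the only thing requiring care is the bookkeeping in fixing the canonical embedding $J^*\hookrightarrow X^*$ (the inverse of $r|_E$, equivalently the norm-preserving lift into the L-summand $\ker P$) so that both sides of the asserted identity are literally subsets of $X^*$. As this is a standard result, in the paper we simply cite \cite[I.1 Lemma 1.5]{HWW} rather than reproducing the argument.
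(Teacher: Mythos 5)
Your proof is correct: the reduction to the $\ell_1$-sum statement via the L-projection $P$ with range $J^\perp$ and the isometric identification of $\ker P$ with $J^*$ is exactly the right bookkeeping, and the extreme-point computation for $B_{F\oplus_1 E}$ is carried out without gaps in both directions. The paper does not prove this lemma at all --- it simply cites \cite[I.1 Lemma 1.5]{HWW} --- and your argument is essentially the standard proof of that cited result, so there is nothing to compare beyond noting that you have supplied the omitted details correctly.
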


\begin{prop}\label{expt}
For a Banach space $X$ with $n(X)>0$, if $\mathcal{K}_v(X)$ is an M-ideal in $\mathcal{L}_v(X)$, then 
\begin{enumerate}
\item $\text{Ext} (B_{\mathcal{L}_v(X)^*})\subset \overline{\{\lambda x^*\otimes x:(x^*,x)\in\Pi(X)\text{ and }\lambda\in\mathbb{T}\}}^{w^*}$, and 
\item $\text{Ext}(B_{\mathcal{K}_v(X)^*})\subset\{\lambda x^{**}\otimes x^*:(x^{**},x^*)\in\Pi(X^*)\text{ and }\lambda\in\mathbb{T}\}$
\end{enumerate}
where $\mathbb{T}$ is the set of scalars of modulus $1$. 
\end{prop}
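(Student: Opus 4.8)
The plan is to derive (1) from a soft Milman argument and then to obtain (2) by pushing an extreme point of $B_{\mathcal{K}_v(X)^*}$ through the M-ideal structure (Lemma~\ref{extremepoints}) and the approximation scheme of Theorem~\ref{tfae0}. For (1), given $(x^*,x)\in\Pi(X)$ let $x^*\otimes x$ denote the functional $T\mapsto x^*(Tx)$ on $\mathcal{L}_v(X)$ and put $W:=\{\lambda\,x^*\otimes x:(x^*,x)\in\Pi(X),\ \lambda\in\mathbb{T}\}\subseteq B_{\mathcal{L}_v(X)^*}$. By the definition of the numerical radius, the prepolar of $W$ in $\mathcal{L}_v(X)$ is exactly $B_{\mathcal{L}_v(X)}$; since $W$ is $\mathbb{T}$-invariant and $0\in\operatorname{co}(W)$, the bipolar theorem gives $B_{\mathcal{L}_v(X)^*}=\overline{\operatorname{co}}^{\,w^*}\!\bigl(\overline{W}^{\,w^*}\bigr)$. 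As $\overline{W}^{\,w^*}$ is $w^*$-compact, Milman's partial converse to the Krein--Milman theorem yields $\text{Ext}(B_{\mathcal{L}_v(X)^*})\subseteq\overline{W}^{\,w^*}$, which is (1). (Note that (1) does not actually use the M-ideal hypothesis.)

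For (2), let $\psi\in\text{Ext}(B_{\mathcal{K}_v(X)^*})$. Since $\mathcal{K}_v(X)$ is an M-ideal, $\psi$ has a (unique) norm-preserving extension $\widetilde{\psi}\in\mathcal{L}_v(X)^*$, and by Lemma~\ref{extremepoints} $\widetilde{\psi}\in\text{Ext}(B_{\mathcal{L}_v(X)^*})$; in particular $\|\widetilde{\psi}\|=\|\psi\|=1$ and $\widetilde{\psi}|_{\mathcal{K}_v(X)}=\psi$. By (1), $\widetilde{\psi}$ is a $w^*$-limit of a net $\bigl(\lambda_\gamma\,x_\gamma^*\otimes x_\gamma\bigr)$ in $W$; passing to a subnet, I may assume $\lambda_\gamma\to\lambda\in\mathbb{T}$, $x_\gamma^*\to x^*$ in $\sigma(X^*,X)$ with $x^*\in B_{X^*}$, and (viewing each $x_\gamma$ inside $X^{**}$) $x_\gamma\to x^{**}$ in $\sigma(X^{**},X^*)$ with $x^{**}\in B_{X^{**}}$. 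Evaluating at the identity gives $\widetilde{\psi}(id_X)=\lim_\gamma\lambda_\gamma\,x_\gamma^*(x_\gamma)=\lambda$. Evaluating at a compact operator $K$: since $K^*$ is compact, $K^*x_\gamma^*\to K^*x^*$ in norm, hence $\widetilde{\psi}(K)=\lim_\gamma\lambda_\gamma\,(K^*x_\gamma^*)(x_\gamma)=\lim_\gamma\lambda_\gamma\,(K^*x^*)(x_\gamma)=\lambda\,x^{**}(K^*x^*)$. Thus $\psi=\lambda\,(x^{**}\otimes x^*)$ with $x^{**}\in B_{X^{**}}$, $x^*\in B_{X^*}$, in the notation of the statement.

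It remains to upgrade this to $(x^{**},x^*)\in\Pi(X^*)$. Apply Theorem~\ref{tfae0}(3) with $T=id_X$: there is a bounded shrinking compact approximation $\{K_\alpha\}$ of $id_X$ with $\limsup_\alpha v(S+id_X-K_\alpha)\le 1$ for every $S\in B_{\mathcal{K}_v(X)}$. Since $K_\alpha^*\to id_{X^*}$ strongly, $K_\alpha^*x^*\to x^*$ in norm, so $\psi(K_\alpha)=\lambda\,x^{**}(K_\alpha^*x^*)\to\lambda\,x^{**}(x^*)$. Given $\varepsilon\in(0,1)$, use $\|\psi\|=1$ together with a rotation to pick $S\in B_{\mathcal{K}_v(X)}$ with $\psi(S)=\lambda s$ for a real $s\in(1-\varepsilon,1]$. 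Then $\widetilde{\psi}(S+id_X-K_\alpha)=\psi(S)+\lambda-\psi(K_\alpha)=\lambda(s+1)-\psi(K_\alpha)$, while $|\widetilde{\psi}(S+id_X-K_\alpha)|\le\|\widetilde{\psi}\|\,v(S+id_X-K_\alpha)$; taking $\limsup_\alpha$ and then $\varepsilon\to 0$ gives $|2-x^{**}(x^*)|\le 1$. Together with $|x^{**}(x^*)|\le 1$ this forces $x^{**}(x^*)=1$, whence $\|x^{**}\|=\|x^*\|=1$, i.e.\ $(x^{**},x^*)\in\Pi(X^*)$, completing (2).

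I expect this last step --- the normalization $(x^{**},x^*)\in\Pi(X^*)$ --- to be the real obstacle. A Milman argument by itself, whether run in $\mathcal{L}_v(X)^*$ or directly in $\mathcal{K}_v(X)^*$ (using $v(K)=v(K^*)$ to see that the functionals $\lambda\,x^{**}\otimes x^*$ with $(x^{**},x^*)\in\Pi(X^*)$ norm $\mathcal{K}_v(X)$), only places an extreme point in the $w^*$-closure of these rank-one functionals, and that closure genuinely contains degenerate forms with $\|x^{**}\|,\|x^*\|<1$; this is exactly why (2), unlike (1), carries no $w^*$-closure. Removing the degeneracy seems to require precisely the combination above: the norm-preserving extension $\widetilde{\psi}$ supplied by the M-ideal (pinning down $\|\widetilde{\psi}\|=1$ and $\widetilde{\psi}(id_X)$), the three-ball inequality $v(S+id_X-K_\alpha)\lesssim 1$ of Theorem~\ref{tfae0}, and the norm attainment of $\psi$ on $B_{\mathcal{K}_v(X)}$. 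A secondary technical point is that the subnet $w^*$-limits must cooperate with the strong convergence $K_\alpha^*\to id_{X^*}$; the device making this work is that compactness of $K^*$ converts $w^*$-convergence of $x_\gamma^*$ into norm convergence of $K^*x_\gamma^*$, which is what collapses $\widetilde{\psi}(K)$ to $\lambda\,x^{**}(K^*x^*)$ for a general compact $K$.
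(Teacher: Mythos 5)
Your proof is correct and follows essentially the same route as the paper: part (1) via bipolar/Hahn--Banach plus Milman, and part (2) by pushing an extreme point of $B_{\mathcal{K}_v(X)^*}$ into $\mathrm{Ext}(B_{\mathcal{L}_v(X)^*})$ through Lemma~\ref{extremepoints}, taking subnet limits of the net $\lambda_\gamma x_\gamma^*\otimes x_\gamma$, and using compactness of $K^*$ to identify the restriction to $\mathcal{K}_v(X)$ as $\lambda\, x^{**}\otimes x^*$. The only divergence is the final normalization: the paper takes the shrinking approximation $\{K_\alpha\}$ of Proposition~\ref{scap}, which converges to $id_X$ in the $\sigma(\mathcal{L}_v(X),\mathcal{K}_v(X)^*)$-topology, and reads off $|x^{**}(x^*)|=\lim_\alpha|\Psi(K_\alpha)|=|\Psi(id_X)|=1$ directly, whereas you reach the same conclusion through the three-ball inequality of Theorem~\ref{tfae0}(3) together with near norm-attainment of $\psi$ on $B_{\mathcal{K}_v(X)}$ --- a slightly longer but valid variant that even yields $x^{**}(x^*)=1$ exactly, dispensing with the paper's final rotation $\mu$.
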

\begin{proof}~
For each $T\in\mathcal{L}_v(X)$, it holds that $v(T)=\sup\{\re \lambda x^*(Tx)~:~(x^*,x)\in\Pi(X)\text{ and }\lambda\in\mathbb{T}\}$. Hence, from the Hahn-Banach separation theorem, we have 
$$\overline{\text{conv}}^{w^*}\left(\{\lambda x^*\otimes x:(x^*,x)\in\Pi(X)\text{ and }\lambda\in\mathbb{T}\}\right)=B_{\mathcal{L}_v(X)^*},$$
and this shows that
\begin{equation*}\label{Lext}
\text{Ext} (B_{\mathcal{L}_v(X)^*})\subset \overline{\{\lambda x^*\otimes x:(x^*,x)\in\Pi(X)\text{ and }\lambda\in\mathbb{T}\}}^{w^*}    
\end{equation*}
from Milman theorem.

For fixed $\Psi\in \text{Ext}  B_{\mathcal{K}_v(X)^*}$, from Lemma \ref{extremepoints} which implies $\text{Ext} B_{\mathcal{K}_v(X)^*}\subset \text{Ext}  B_{\mathcal{L}_v(X)^*}$, we get a net $\Psi_\beta=\lambda_\beta x_\beta^*\otimes x_\beta$ for $(x_\beta^*,x_\beta)\in\Pi(X)$ and $\lambda_\beta\in\mathbb{T}$ such that $\Psi_\beta(T)=\lambda_\beta x_\beta^* (T x_\beta)$ converges to $\Psi(T)$ for every $T\in \mathcal{L}_v(X)$. Passing to appropriate subnets, we may assume that $x_\beta^*$ and $x_\beta$ converge weakly-$*$ to $x^*\in B_{X^*}$ and $x^{**}\in B_{X^{**}}$ respectively, and $\lambda_\beta$ converges to $\lambda\in\mathbb{T}$. Since it holds that
$$\Psi(K)=\lim_\beta \lambda_\beta K^*(x_\beta^*)(x_\beta)=\lambda x^{**}(K^*x^*)$$
for every $K\in\mathcal{K}_v(X)$, we see that $\Psi=\lambda x^{**}\otimes x^*$ on $\mathcal{K}_v(X)$.

From the proof of Proposition \ref{scap}, take a shrinking compact approximation  $\{K_\alpha\}\subset \mathcal{K}_v(X)$  for the identity $id_X$ which converges to $id_X$ in the  $\sigma(\mathcal{L}_v(X),\mathcal{K}_v(X)^*)$-topology.

We have that 
$$|x^{**}(x^*)|=\lim_\alpha|x^{**}(K_\alpha^*x^*)|=|\Psi(id_X)|=\lim_\beta|\Psi_\beta(id_X)|=\lim_\beta|x_\beta^*(x_\beta)|=1.$$

Thus, we deduce that $(\mu x^{**},x^*)\in\Pi(X^*)$ for some $\mu\in\mathbb{T}$ which completes the proof.
\end{proof}

\begin{theorem}\label{Farpt}
Let $X$ be a Banach space with $n(X)>0$.
\begin{enumerate}
\item For each $T\in\mathcal{L}_v(X)$, $\rho_v\left(T,B_{\mathcal{K}_v(X)}\right)=v(T)+1$.
\item If $T\in\mathcal{L}_v(X)$ attains its numerical radius, then $T$ has a farthest point in $B_{\mathcal{K}_v(X)}$.
\item If $\mathcal{K}_v(X)$ is an M-ideal in $\mathcal{L}_v(X)$ and $T\in\mathcal{L}_v(X)$ has a farthest point in $B_{\mathcal{K}_v(X)}$, then $T^*$ attains its numerical radius.
\end{enumerate}
\end{theorem}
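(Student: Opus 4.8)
The plan is to prove the three parts in order, since part (3) will rely on the extreme-point description of $B_{\mathcal{K}_v(X)^*}$ from Proposition \ref{expt}.

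For (1): one inequality is immediate, $\rho_v(T,B_{\mathcal{K}_v(X)}) \leq v(T)+\sup\{v(K):K\in B_{\mathcal{K}_v(X)}\} = v(T)+1$. For the reverse, I would fix $\varepsilon>0$ and choose $(x_0^*,x_0)\in\Pi(X)$ with $|x_0^*(Tx_0)|\geq v(T)-\varepsilon$; pick $\lambda\in\mathbb{T}$ with $\lambda x_0^*(Tx_0)=|x_0^*(Tx_0)|$, and set $K=-\bar\lambda\, x_0^*\otimes x_0$. Then $K\in B_{\mathcal{K}_v(X)}$ (it has numerical radius $1$), and $v(T-K)\geq |x_0^*((T-K)x_0)| = |x_0^*(Tx_0)+\bar\lambda| = |x_0^*(Tx_0)| + 1 \geq v(T)+1-\varepsilon$. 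Letting $\varepsilon\to 0$ gives (1). For (2): if $T$ attains its numerical radius, run the same computation with $\varepsilon=0$: there is $(x_0^*,x_0)\in\Pi(X)$ with $|x_0^*(Tx_0)|=v(T)$, and the operator $K=-\bar\lambda\,x_0^*\otimes x_0$ satisfies $v(T-K)\geq v(T)+1 = \rho_v(T,B_{\mathcal{K}_v(X)})$ by (1), so $v(T-K)=v(T)+1$ and $K$ is a farthest point.

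For (3), the harder direction: suppose $K_0\in B_{\mathcal{K}_v(X)}$ is a farthest point for $T$, so $v(T-K_0)=v(T)+1$. I want to produce a pair in $\Pi(X^*)$ at which $T^*$ attains its numerical radius. Consider $v(T-K_0) = \sup\{|\Psi(T-K_0)| : \Psi\in B_{\mathcal{L}_v(X)^*}\}$; since $B_{\mathcal{L}_v(X)^*}$ is weak-$*$ compact and $\mathrm{Ext}$ is weak-$*$ dense in it after taking closure, and by Proposition \ref{expt}(1) every extreme point of $B_{\mathcal{L}_v(X)^*}$ is a weak-$*$ limit of functionals $\lambda x^*\otimes x$ with $(x^*,x)\in\Pi(X)$, there is a net $\lambda_\beta x_\beta^*\otimes x_\beta$ with $\lambda_\beta x_\beta^*((T-K_0)x_\beta) \to v(T)+1$. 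Passing to subnets, assume $x_\beta^*\xrightarrow{w^*} x^*\in B_{X^*}$, $x_\beta\xrightarrow{w^*}x^{**}\in B_{X^{**}}$, $\lambda_\beta\to\lambda\in\mathbb{T}$. The key point is that $K_0^*$ is weak-$*$ to norm continuous on bounded sets (compactness of $K_0$), so $K_0^*x_\beta^*\to K_0^*x^*$ in norm, and hence $\lambda_\beta x_\beta^*(K_0 x_\beta) = \lambda_\beta (K_0^* x_\beta^*)(x_\beta) \to \lambda x^{**}(K_0^*x^*)$, while $|\lambda x^{**}(K_0^*x^*)| \leq v(K_0)\leq 1$. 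Meanwhile, writing $\Phi = \lim_\beta \lambda_\beta x_\beta^*\otimes x_\beta$ in the weak-$*$ topology of $\mathcal{L}_v(X)^*$, we get $\Phi(T) + (\text{a number of modulus} \leq 1) $ summing in modulus to $v(T)+1$, which forces $|\Phi(T)| = v(T)$ and $|\lambda x^{**}(K_0^*x^*)|=1$; the latter, combined with $\|x^{**}\|\leq1$, $\|x^*\|\leq 1$ and $|x^{**}(x^*)|$ controlled by compact approximation of the identity (exactly as in the last paragraph of the proof of Proposition \ref{expt}, since the $\sigma(\mathcal{L}_v(X),\mathcal{K}_v(X)^*)$-limit argument applies to $K_0$ as well), yields $|x^{**}(x^*)|=1$, so $(\mu x^{**},x^*)\in\Pi(X^*)$ for some $\mu\in\mathbb{T}$. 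Finally $|\Phi(T)| = v(T)$ together with the identification $\Phi = \lambda x^{**}\otimes x^*$ on $\mathcal{K}_v(X)$ (and an extension/limit argument to evaluate on $T$) gives $|x^{**}(T^*x^*)| = v(T) = v(T^*)$, so $T^*$ attains its numerical radius at $(\mu x^{**}, x^*)\in\Pi(X^*)$.

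The main obstacle I anticipate is the bookkeeping in the last step: pinning down precisely how $\Phi$ acts on the non-compact operator $T$ (as opposed to on $\mathcal{K}_v(X)$, where Proposition \ref{expt} already identifies it as $\lambda x^{**}\otimes x^*$) and justifying that $|\Phi(T)| = v(T)$ forces $|x^{**}(T^*x^*)| = v(T)$. This is where one must be careful that the weak-$*$ limit $\Phi$ of the $\lambda_\beta x_\beta^*\otimes x_\beta$ in $\mathcal{L}_v(X)^*$ really does evaluate on $T$ as $\lim_\beta \lambda_\beta x_\beta^*(Tx_\beta)$, and that this limit equals $x^{**}(T^*x^*)$ — the latter equality is the genuinely delicate point and is \emph{not} automatic for non-compact $T$, so one should instead argue directly from $v(T-K_0) = \lim_\beta \lambda_\beta x_\beta^*((T-K_0)x_\beta)$ and the separate norm-convergence $\lambda_\beta x_\beta^*(K_0x_\beta)\to \lambda x^{**}(K_0^*x^*)$ to extract $\lambda_\beta x_\beta^*(Tx_\beta) \to$ a scalar of modulus $v(T)+1 - |\lambda x^{**}(K_0^*x^*)| = v(T)$, and then bound $|\lambda_\beta x_\beta^*(Tx_\beta)| = |(T^*x_\beta^*)(x_\beta)|$ against $v(T^*)$ while checking the limit is $x^{**}(T^*x^*)$ using that the perturbation by $K_0$ is what restores the needed continuity. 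I would present the argument carefully to make sure each passage to a subnet is legitimate and that the final pair indeed lies in $\Pi(X^*)$.
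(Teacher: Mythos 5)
Parts (1) and (2) of your proposal are correct and coincide with the paper's argument: the rank-one perturbation $K=-\bar\lambda\,x_0^*\otimes x_0$ with $(x_0^*,x_0)\in\Pi(X)$ has numerical radius one and gives $v(T-K)\ge |x_0^*(Tx_0)|+1$, which settles (1) and, run at a numerical-radius-attaining pair, settles (2).

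Part (3) is where there is a genuine gap, and you have in fact located it yourself but not closed it. Your maximizing net gives $\lim_\beta\lambda_\beta x_\beta^*(Tx_\beta)=v(T)$ and $\lambda x^{**}(K_0^*x^*)=-1$, but to conclude that $T^*$ attains its numerical radius you must identify the scalar $\lim_\beta\lambda_\beta x_\beta^*(Tx_\beta)$ with $\lambda x^{**}(T^*x^*)$, and your suggested remedy --- that ``the perturbation by $K_0$ restores the needed continuity'' --- does not work: $T-K_0$ is no more compact than $T$, so neither $T^*x_\beta^*$ nor $(T-K_0)^*x_\beta^*$ converges in norm, and the weak-$*$ limit functional $\Phi$ of the net $\lambda_\beta x_\beta^*\otimes x_\beta$ a priori has a nonzero singular component in $\mathcal{K}_v(X)^\perp$ under the $\ell_1$-decomposition $\mathcal{L}_v(X)^*=\mathcal{K}_v(X)^*\oplus_1\mathcal{K}_v(X)^\perp$. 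For the same reason your derivation of $|x^{**}(x^*)|=1$ is incomplete: the step $\Phi(id_X)=\lim_\alpha\Phi(K_\alpha)=\lambda x^{**}(x^*)$ borrowed from Proposition \ref{expt} is only valid for the normal part of $\Phi$, and nothing in your argument shows the singular part vanishes. The gap is closable --- from $v(T)+1=\Phi(T)-\Phi_1(K_0)$ with $|\Phi(T)|\le v(T)$ and $|\Phi_1(K_0)|\le\|\Phi_1\|$ one forces $\|\Phi_1\|=1$, hence $\Phi_2=0$, and then the unique norm-preserving extension property of M-ideals identifies $\Phi$ with $S\mapsto\lambda x^{**}(S^*x^*)$ --- but this extra step is exactly where the M-ideal hypothesis must enter, and your sketch never uses it in a load-bearing way.

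The paper avoids this difficulty by a different and shorter route: since the extreme points of $B_{\mathcal{L}_v(X)^*}$ form a James boundary, one picks $\Psi\in \mathrm{Ext}\,(B_{\mathcal{L}_v(X)^*})$ with $\Psi(T-K_0)=v(T)+1$; Lemma \ref{extremepoints} forces $\Psi$ to lie either in $\mathrm{Ext}\,(B_{\mathcal{K}_v(X)^*})$ or in $\mathrm{Ext}\,(B_{\mathcal{K}_v(X)^\perp})$, the latter being impossible because it would give $\Psi(T)=v(T)+1>v(T)$; then Proposition \ref{expt}(2) yields $\Psi=\mu\, x^{**}\otimes x^*$ with $(x^{**},x^*)\in\Pi(X^*)$, and the equality case in $v(T)+1=\mu x^{**}(T^*x^*)-\mu x^{**}(K_0^*x^*)\le v(T^*)+v(K_0^*)$ finishes the proof. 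I recommend you adopt this extreme-point argument, or else insert explicitly the computation showing that the singular part of your limit functional vanishes.
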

\begin{proof}~

\begin{itemize}
\item [(1)]It is clear that $\rho_v\left(T,B_{\mathcal{K}_v(X)}\right)\leq v(T)+1$. 
For the converse, let $\varepsilon>0$ be given. For $(x^*,x)\in\Pi(X)$ and $\lambda\in\mathbb{T}$ such that $\bar{\lambda}x^*(Tx)=|x^*(Tx)|\geq v(T)-\varepsilon$, consider a rank $1$ operator $K=-\lambda x^*\otimes x$. We have
$$v(T-K)\geq|x^*(Tx)+\lambda|=\bar{\lambda}x^*(Tx)+1\geq v(T)+1-\varepsilon.$$
\item [(2)]
Suppose that $T$ attains its numerical radius at $(x_0^*,x_0)\in\Pi(X)$, and $\lambda_0\in\mathbb{T}$ satisfies $\bar{\lambda_0}x_0^*Tx_0=v(T)$. It is clear that $K_0=\lambda_0x^*_0\otimes x_0$ is a farthest point of $T$. 

\item [(3)] By the assumption, there exists $K\in B_{\mathcal{K}_v(X)}$ such that $v(T-K)=v(T)+1$. Since the set of extreme points of the closed unit ball of a dual space is a James boundary, there exists $\Psi\in \text{Ext}\left(B_{\mathcal{L}_v(X)^*}\right)$ which satisfies $\Psi(T-K)=v(T-K)$. Since $\mathcal{K}_v(X)$ is an M-ideal in $\mathcal{L}_v(X)$, we see that
$$\Psi\in \text{Ext}\left(B_{\mathcal{K}_v(X)^*}\right)~\text{ or }~ \Psi\in \text{Ext}\left(B_{\mathcal{K}_v(X)^\perp}\right)$$
from Lemma \ref{extremepoints}. If $\Psi$ is belonging to $\text{Ext}\left(B_{\mathcal{K}_v(X)^\perp}\right)$, then we have $v(T)+1=\Psi(T-K)=\Psi(T)$ which is not possible. Therefore, it holds that $\Psi\in \text{Ext}\left(B_{\mathcal{K}_v(X)^*}\right)$, and we get $\Psi=\mu x^{**}\otimes x^*$ for some $\mu\in\mathbb{T}$ and $(x^{**},x^*)\in\Pi(X^*)$ by Proposition \ref{expt}. Thus, we have
$$v(T)+1=\Psi(T-K)=\mu x^{**}(T^*x^*)-\mu x^{**}(K^*x^*)\leq v(T^*)+v(K^*)=v(T)+1.$$
Consequently, $T^*$ attains its numerical radius at $(x^{**},x^*)$.
\end{itemize}
\end{proof}

$~$

\subsection{Compact perturbation property}~

In \cite{WWerner}, it is shown that if $\mathcal{K}(X)$ is an M-ideal in $\mathcal{L}(X)$, then $X$ fulfills a condition that every $T\in\mathcal{L}(X)$ whose adjoint $T^*$ does not attain its norm satisfies $\|T\|=\|T\|_e$. Very recently, this condition has come to be called the adjoint compact perturbation property (in short ACPP), and many interesting results have been found (see \cite{AGPT,GP,HK,JMR}). The main aim of the present section is to get the analogous result for $X$ such that $\mathcal{K}_v(X)$ is an M-ideal in $\mathcal{L}_v(X)$.

\begin{theorem}\label{NACPP}
Let $X$ be a Banach space with $n(X)>0$ such that $\mathcal{K}_v(X)$ is an M-ideal in $\mathcal{L}_v(X)$.
\begin{enumerate}
\item For $T\in\mathcal{L}_v(X)$, if $T^*$ does not attain its numerical radius, then we have
$$v(T)=v_e(T).$$
\item The set of operators whose adjoint operators do not attain their numerical radius is nowhere dense in $\mathcal{L}_v(X)$.
\end{enumerate}
\end{theorem}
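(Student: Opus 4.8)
The plan is to prove (1) directly and then deduce (2) as a soft consequence. For (1), suppose toward a contradiction that $T^*$ does not attain its numerical radius but $v(T)>v_e(T)$. After rescaling we may assume $v(T)=1$, so $v_e(T)<1$; choose $K_0\in\mathcal{K}_v(X)$ with $v(T-K_0)<1$. The key point is to produce an extreme functional on $B_{\mathcal{L}_v(X)^*}$ that \emph{norms} $T$ and use the M-ideal splitting to locate it. Concretely, since $v(T)=1$, pick a net $(x_\beta^*,x_\beta)\in\Pi(X)$ and $\lambda_\beta\in\mathbb{T}$ with $\re\,\lambda_\beta x_\beta^*(Tx_\beta)\to 1$; the functionals $\Psi_\beta=\lambda_\beta x_\beta^*\otimes x_\beta\in B_{\mathcal{L}_v(X)^*}$ then satisfy $\re\Psi_\beta(T)\to 1=v(T)$, so any $w^*$-cluster point $\Psi$ of $(\Psi_\beta)$ has $\Psi(T)=v(T)$. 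By the Krein--Milman / Bauer argument one can in fact take $\Psi\in\mathrm{Ext}(B_{\mathcal{L}_v(X)^*})$ with $\Psi(T)=v(T)=1$. Now invoke Lemma \ref{extremepoints}: either $\Psi\in\mathrm{Ext}(B_{\mathcal{K}_v(X)^\perp})$ or $\Psi\in\mathrm{Ext}(B_{\mathcal{K}_v(X)^*})$. In the first case $\Psi$ annihilates $\mathcal{K}_v(X)$, so $1=\Psi(T)=\Psi(T-K_0)\le v(T-K_0)<1$, a contradiction. In the second case Proposition \ref{expt} gives $\Psi=\mu x^{**}\otimes x^*$ with $(x^{**},x^*)\in\Pi(X^*)$ and $\mu\in\mathbb{T}$, whence $|x^{**}(T^*x^*)|=|\Psi(T)|=1=v(T)\ge v(T^*)$, so $T^*$ attains its numerical radius at $(x^{**},x^*)$ — contradicting the hypothesis. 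Hence $v(T)=v_e(T)$.

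For (2), let $\mathcal{A}=\{T\in\mathcal{L}_v(X): T^*\text{ does not attain its numerical radius}\}$. By (1), every $T\in\mathcal{A}$ satisfies $v(T)=v_e(T)$, i.e. $\inf_{K\in\mathcal{K}_v(X)}v(T-K)=v(T)$; in particular $\mathcal{A}$ is disjoint from the set $\mathcal{N}=\{T: v_e(T)<v(T)\}$, which is open (the map $T\mapsto v(T)-v_e(T)$ is continuous, being a difference of two Lipschitz functions of $T$). So it suffices to show $\mathcal{A}$ is contained in the boundary of $\mathcal{N}$, equivalently that $\overline{\mathcal{N}}\supseteq\mathcal{A}$, i.e. that every $T\in\mathcal{A}$ is a limit of operators $S$ with $v_e(S)<v(S)$; then $\mathcal{A}\subseteq\overline{\mathcal{N}}\setminus\mathcal{N}=\partial\mathcal{N}$, which is nowhere dense (closed with empty interior). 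Given $T\in\mathcal{A}$ and $\eps>0$, fix $(x_0^*,x_0)\in\Pi(X)$ with $\re\,x_0^*(Tx_0)>v(T)-\eps$, and put $S=T+\eps\,x_0^*\otimes x_0$. Then $S$ attains its numerical radius — one checks $\re\,x_0^*(Sx_0)=\re\,x_0^*(Tx_0)+\eps$, and a short estimate using $(x_0^*,x_0)\in\Pi(X)$ shows this value equals $v(S)$ for $\eps$ small — so by Theorem \ref{Farpt}(2) $S$ has a farthest point in $B_{\mathcal{K}_v(X)}$, and then the contrapositive of part (1) (or a direct argument) forces $v_e(S)<v(S)$; since $v(S-T)=\eps$, $T$ lies in $\overline{\mathcal{N}}$, as required.

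The main obstacle is the first step of (1): upgrading the norming net $(\Psi_\beta)$ to an \emph{extreme} norming functional $\Psi$ with $\Psi(T)=v(T)$. The clean route is to note that $v(T)=\max\{\re\Psi(T):\Psi\in B_{\mathcal{L}_v(X)^*}\}$ is attained (the dual ball is $w^*$-compact), that the face $F=\{\Psi\in B_{\mathcal{L}_v(X)^*}:\re\Psi(T)=v(T)\}$ is a nonempty $w^*$-compact convex set, and that $\mathrm{Ext}(F)\subseteq\mathrm{Ext}(B_{\mathcal{L}_v(X)^*})$ (extreme points of a face are extreme in the whole ball); Krein--Milman then yields $\Psi\in\mathrm{Ext}(B_{\mathcal{L}_v(X)^*})\cap F$. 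One must be slightly careful with the complex case — work with $\re\Psi(T)$ throughout and absorb the phase into $\mu$ at the end, exactly as in the proof of Theorem \ref{Farpt}(3). The remaining estimates (continuity of $v_e$, the computation $v(S)=\re\,x_0^*(Sx_0)$ for the perturbed operator, the openness of $\mathcal{N}$) are routine and parallel the operator-norm ACPP arguments in \cite{WWerner,GP}.
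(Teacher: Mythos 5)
Part (1) of your proposal is correct and is essentially the paper's argument: produce an extreme functional of $B_{\mathcal{L}_v(X)^*}$ norming $T$ (via the $w^*$-compact face of norming functionals), split it using Lemma \ref{extremepoints}, rule out the $\mathcal{K}_v(X)^*$ half by Proposition \ref{expt} and the non-attainment hypothesis, and read off $v(T)\le v_e(T)$ from the $\mathcal{K}_v(X)^\perp$ half. Your phrasing as a contradiction versus the paper's direct conclusion is immaterial.

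Part (2), however, contains a genuine gap at the perturbation step. You set $S=T+\eps\,x_0^*\otimes x_0$ and assert that ``a short estimate shows'' $S$ attains its numerical radius at $(x_0^*,x_0)$. This is not justified and is false in general: from $\re x_0^*(Tx_0)>v(T)-\eps$ you only get $\re x_0^*(Sx_0)>v(T)\ge v(S)-\eps$, which does not force attainment --- the supremum defining $v(S)$ may still be approached only along other pairs. (This is exactly the delicacy that makes Bishop--Phelps--Bollob\'as-type results nontrivial; a single rank-one bump aligned with an almost-norming pair does not produce an attaining operator.) Worse, even granting attainment, the deduction ``$S$ attains its numerical radius $\Rightarrow v_e(S)<v(S)$'' via ``the contrapositive of part (1)'' is a non sequitur: the contrapositive of (1) reads $v(S)\neq v_e(S)\Rightarrow S^*$ attains, which points the wrong way, and the implication you want is simply false ($\mathrm{id}_{\ell_2}$ attains its numerical radius yet $v_e(\mathrm{id}_{\ell_2})=v(\mathrm{id}_{\ell_2})$). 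There is also a minor phase issue: without multiplying the rank-one perturbation by $x_0^*(Tx_0)/|x_0^*(Tx_0)|$ you cannot guarantee the perturbation increases $|x_0^*(Sx_0)|$ in the complex (or even real) case, since you cannot prescribe the phase of $x_0^*(Tx_0)$ by rotating the pair.

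The repair is short and is what the paper does: since $S-T$ is compact, $v_e(S)=v_e(T)$, and by (1) $v_e(T)=v(T)$ for $T\in\mathcal{A}$ (the paper works with the larger closed set $E=\{S:v(S)=v_e(S)\}$); choosing the perturbation $U=T+\frac{x_0^*(Tx_0)}{|x_0^*(Tx_0)|}\frac{\eps}{2}\,x_0^*\otimes x_0$ with the phase matched gives $v(U)\ge |x_0^*(Tx_0)|+\frac{\eps}{2}>v(T)=v_e(T)=v_e(U)$, so $U\in\mathcal{N}$ and $v(U-T)<\eps$. No attainment of $U$, no farthest points, and no appeal to Theorem \ref{Farpt} are needed.
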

\begin{proof}~

\begin{enumerate}
\item Since $$H=\{\xi \in \mathcal{L}_v(X)^*~:~\xi(T)=v(T)\}$$
 is a weak-$*$-closed supporting manifold of $B_{\mathcal{L}_v(X)^*}$, there exists $\Psi\in \text{Ext} B_{\mathcal{L}_v(X)^*}$ such that $\Psi(T)=v(T)$.
From Lemma \ref{extremepoints}, we see that $\Psi$ is an element of either $\text{Ext} B_{\mathcal{K}_v(X)^*}$ or $\text{Ext} B_{\mathcal{K}_v(X)^\perp}$. In case $\Psi\in \text{Ext} B_{\mathcal{K}_v(X)^*}$, we see that $T^*$ attains its numerical radius. Indeed, 
$$v(T^*)=v(T)=\Psi(T)=\lambda x^{**}(T^*x^*)$$
for some $\lambda\in\mathbb{T}$ and $(x^{**},x^*)\in\Pi(X^*)$ by (2) of Proposition \ref{expt}. Therefore, we have that $\Psi\in \text{Ext} B_{\mathcal{K}_v(X)^\perp}$ which shows 
$$v(T)=\Psi(T)\leq v_e(T)\leq v(T).$$
\item Note that a set 
$$E=\{S\in \mathcal{L}_v(X)~:~v(S)=v_e(S)\}$$
is closed and contains all the operators whose adjoint does not attain its numerical radius by (1). Hence, it is enough to show that $E$ is nowhere dense. Take arbitrary $T\in E$ and $\varepsilon>0$. In case $T=0$, we see that $U=T+\frac{\varepsilon}{2}x^*\otimes x$ satisfies $$v(U-T)\leq \left\|\frac{\varepsilon}{2}x^*\otimes x\right\|<\varepsilon\text{~and~}v(U)>0=v_e(T)=v_e(U)$$ for arbitrary $(x^*,x)\in \Pi(X)$.

Otherwise, take $(x^*,x)\in \Pi(X)$ such that $|x^*(Tx)|>v(T)-\min\{1,v(T)\}\frac{\varepsilon}{4}$. We see that $U=T+\frac{x^*(Tx)}{|x^*(Tx)|}\frac{\varepsilon}{2}x^*\otimes x$ satisfies $$v(U-T)\leq \left\|\frac{\varepsilon}{2}x^*\otimes x\right\|<\varepsilon\text{~and~}v(U)\geq|x^*(Tx)|+\frac{\varepsilon}{2}> v(T)+\frac{\varepsilon}{4}>v_e(T)=v_e(U).$$

\end{enumerate}
\end{proof}

 We also give a description of the essential numerical radius in terms of a weakly-null net and a weakly-$*$-null net whenever the space of compact operators endowed with the numerical radius norm is an M-ideal. This is an analogy of \cite[VI.4. Proposition 4.7]{HWW} which is the version for the operator norm.
\begin{prop}
Let $X$ be an infinite-dimensional Banach space such that $n(X)>0$ and $w_v(T)$ for $T\in \mathcal{L}_v(X)$ be defined by
$$w_v(T)=\sup\{\limsup_\alpha|x^*_\alpha Tx_\alpha|:(x_\alpha^*,x_\alpha)\in\Pi(X)\text{~is a net satisfying }x_\alpha\overset{\text{$w$}}{\longrightarrow}0\text{ or }x_\alpha^*\overset{\text{$w^*$}}{\longrightarrow}0 \}.$$
If $\mathcal{K}_v(X)$ is an M-ideal in $\mathcal{L}_v(X)$, then it holds that
$$v_e(T)=w_v(T)$$
\end{prop}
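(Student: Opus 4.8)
The plan is to prove the two inequalities $w_v(T)\le v_e(T)$ and $v_e(T)\le w_v(T)$ separately, mimicking the structure of \cite[VI.4. Proposition 4.7]{HWW} but working with the numerical radius and the pairs in $\Pi(X)$ rather than with single vectors. First I would establish $w_v(T)\le v_e(T)$. Given a net $(x_\alpha^*,x_\alpha)\in\Pi(X)$ with $x_\alpha\overset{w}{\to}0$ (the case $x_\alpha^*\overset{w^*}{\to}0$ is symmetric), and any compact operator $K\in\mathcal K_v(X)$, one has $\|Kx_\alpha\|\to0$ since $K$ maps weakly-null nets to norm-null nets; hence $|x_\alpha^* K x_\alpha|\to0$, and therefore $\limsup_\alpha|x_\alpha^*Tx_\alpha|=\limsup_\alpha|x_\alpha^*(T-K)x_\alpha|\le v(T-K)$. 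Taking the infimum over $K$ gives $\limsup_\alpha|x_\alpha^*Tx_\alpha|\le v_e(T)$, and then the supremum over all admissible nets yields $w_v(T)\le v_e(T)$. This direction does not use the M-ideal hypothesis.

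For the reverse inequality $v_e(T)\le w_v(T)$ I would use the M-ideal structure through Theorem \ref{tfae0}. Applying condition (3) of Theorem \ref{tfae0} to $T$, pick a bounded shrinking compact approximation $\{K_\alpha\}$ for $T$ with $\limsup_\alpha v(S+T-K_\alpha)\le\max\{v(S),v(T)\}$ for all $S\in\mathcal K_v(X)$; a more convenient route is to run the argument of Theorem \ref{tfae0}(3) on $\frac{T}{v(T)}$ together with a suitable scaling, or simply to record that the M-ideal property gives, for each $\varepsilon>0$, a compact $K$ with $v(T-K)\le v_e(T)+\varepsilon$ and then to analyze how the supremum defining $v(T-K)$ is nearly attained. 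Concretely, fix $\varepsilon>0$ and choose $K\in\mathcal K_v(X)$ with $v(T-K)<v_e(T)+\varepsilon$; also, using the definition of $v_e$, for each compact $K$ we still have $v(T)\ge v_e(T)$, so the ``essential part'' of the numerical radius of $T$ must be carried by pairs $(x^*,x)\in\Pi(X)$ along which every fixed compact operator is asymptotically negligible. To make this precise I would argue by contradiction: if $v_e(T)>w_v(T)+3\varepsilon$, then every net in $\Pi(X)$ along which $|x_\alpha^*Tx_\alpha|$ stays above $w_v(T)+\varepsilon$ must fail to be weakly (resp.\ weak-$*$) null in the relevant coordinate, so after passing to a subnet both $x_\alpha$ and $x_\alpha^*$ converge to nonzero weak/weak-$*$ limits $x$ and $x^*$ with $x^*(x)\neq0$; this produces, in the limit, a rank-one contribution $x^*\otimes x$ (up to normalization a compact operator) that can be subtracted from $T$ without decreasing $v(T-\text{that rank-one operator})$ below $v_e(T)$, contradicting the definition of $v_e$. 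The technical engine here is the perturbation estimate $v(T+K')\le\max\{v(T),\dots\}$ supplied by the M-ideal hypothesis via Theorem \ref{tfae0}, applied with $S$ a suitable rank-one compact operator, exactly as in the proof of Proposition \ref{Mineq}.

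The step I expect to be the main obstacle is the reverse inequality, specifically extracting, from a sequence of pairs nearly realizing $v(T-K_\alpha)$ for a shrinking compact approximation $\{K_\alpha\}$, a single net $(x_\beta^*,x_\beta)\in\Pi(X)$ that is weakly (or weak-$*$) null in the right variable and along which $|x_\beta^*Tx_\beta|$ is large. The diagonalization is delicate because the pair must simultaneously (i) nearly attain the numerical radius of $T-K_\alpha$ for indices $\alpha$ going to infinity and (ii) satisfy the appropriate null condition; one handles this by choosing the pairs so that $K_\alpha x_\beta$ and $K_\alpha^*x_\beta^*$ are small for the finitely many relevant $\alpha$, using the shrinking property, and then passing to a subnet whose weak/weak-$*$ cluster point in the non-null variable can be quotiented out. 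Once the diagonal net is in hand, the estimate $|x_\beta^*Tx_\beta|\ge|x_\beta^*(T-K_{\alpha(\beta)})x_\beta|-|x_\beta^* K_{\alpha(\beta)}x_\beta|\ge v(T-K_{\alpha(\beta)})-o(1)\ge v_e(T)-\varepsilon$ closes the gap, and taking $\varepsilon\to0$ gives $v_e(T)\le w_v(T)$, completing the proof. The finite-dimensional exclusion in the hypothesis is used precisely to guarantee the existence of nontrivial weakly-null and weak-$*$-null nets in $X$ and $X^*$, so that $w_v(T)$ is taken over a nonempty family.
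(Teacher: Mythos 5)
Your first paragraph is fine: the inequality $w_v(T)\le v_e(T)$ follows exactly as you say (a compact $K$ sends a bounded weakly null net to a norm null net, and $K^*$ sends a bounded weak-$*$ null net to a norm null net, so $|x_\alpha^*Kx_\alpha|\to0$ along any admissible net), and the paper simply records this direction as obvious. The problem is the reverse inequality, where your argument has a genuine gap rather than just missing details. First, the contradiction you aim for is not a contradiction: you conclude that a rank-one operator ``can be subtracted from $T$ without decreasing $v(T-\cdot)$ below $v_e(T)$,'' but $v(T-K)\ge v_e(T)$ holds for \emph{every} compact $K$ by the definition of $v_e$ as an infimum, so nothing is contradicted. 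Second, the diagonalization does not produce what you need. A pair $(x_\beta^*,x_\beta)\in\Pi(X)$ nearly attaining $v(T-K_{\alpha(\beta)})$ has no reason to be weakly null or weak-$*$ null in either coordinate, and no reason to make $|x_\beta^*K_{\alpha(\beta)}x_\beta|$ small; the shrinking property only controls $K_\alpha x$ and $K_\alpha^*x^*$ for \emph{fixed} $x,x^*$, not along pairs chosen after $\alpha$. Worse, if $\{K_\alpha\}$ is the shrinking approximation of the identity then $|x^*K_\alpha x|\approx x^*(x)=1$ along normalized pairs, so the estimate $|x_\beta^*Tx_\beta|\ge|x_\beta^*(T-K_{\alpha(\beta)})x_\beta|-|x_\beta^*K_{\alpha(\beta)}x_\beta|$ loses a full unit; and if $\{K_\alpha\}$ approximates $T$ instead, Theorem \ref{tfae0} gives no lower bound forcing the nearly-attaining pairs to concentrate on a null net. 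Finally, ``passing to a subnet whose weak/weak-$*$ cluster point in the non-null variable can be quotiented out'' is not a defined operation: modifying $x_\beta$ or $x_\beta^*$ destroys the constraint $x_\beta^*(x_\beta)=1$, i.e.\ takes you out of $\Pi(X)$.

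The paper's proof avoids all of this by working in the dual. Since $v_e$ is the quotient norm of $\mathcal{L}_v(X)/\mathcal{K}_v(X)$ and $\mathcal{K}_v(X)^\perp$ is its dual, Hahn--Banach gives $\psi\in B_{\mathcal{K}_v(X)^\perp}$ with $\psi(T)=v_e(T)$; the face $H=\{\xi\in\mathcal{K}_v(X)^\perp:\xi(T)=v_e(T)\}$ then contains an extreme point $\Psi$ of $B_{\mathcal{K}_v(X)^\perp}$, which by Lemma \ref{extremepoints} (this is where the M-ideal hypothesis enters) is extreme in $B_{\mathcal{L}_v(X)^*}$. By part (1) of Proposition \ref{expt}, $\Psi$ is a weak-$*$ limit of functionals $\lambda_\beta x_\beta^*\otimes x_\beta$ with $(x_\beta^*,x_\beta)\in\Pi(X)$; passing to subnets so that $x_\beta^*\to x^*$ weak-$*$ and $x_\beta\to x^{**}$ weak-$*$ in $X^{**}$, the fact that $\Psi$ annihilates every compact $K$ forces $\lambda x^{**}(K^*x^*)=0$ for all $K$, hence $x^*=0$ or $x^{**}=0$, i.e.\ the net $(x_\beta^*,x_\beta)$ is admissible in the definition of $w_v$, and $v_e(T)=\lim_\beta|x_\beta^*(Tx_\beta)|\le w_v(T)$. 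If you want to salvage a primal argument you would need some substitute for this ``one of the two weak-$*$ limits vanishes'' dichotomy, which is exactly the step your sketch does not supply.
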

\begin{proof}
Since the inequality $v_e(T)\geq w_v(T)$ is obvious for every $T\in\mathcal{L}_v(X)$, we only need to to prove the converse.

 From Hahn-Banach separation theorem, we see that there exists $\psi\in B_{\mathcal{K}_v(X)^\perp}$ such that $\psi(T)=v_e(T)$. Hence, the set 
$$H=\{\xi \in \mathcal{K}_v(X)^\perp~:~\xi(T)=v_e(T)\}$$
 is a weak-$*$-closed supporting manifold of $B_{\mathcal{K}_v(X)^\perp}$ which guarantees that $H\cap  \text{Ext} B_{\mathcal{K}_v(X)^\perp}$ is non-empty. Take $\Psi\in \text{Ext} B_{\mathcal{K}_v(X)^\perp}\cap H$. 

From Lemma \ref{extremepoints} and (1) of Proposition \ref{expt}, we see that $\Psi$ belongs to $\text{Ext} B_{\mathcal{L}(X)^*}$ and that there is a net $\Psi_\beta=\lambda_\beta x_\beta^*\otimes x_\beta$ for $(x_\beta^*,x_\beta)\in\Pi(X)$ and $\lambda_\beta\in\mathbb{T}$ such that $\Psi_\beta(L)=\lambda_\beta x_\beta^* (L x_\beta)$ converges to $\Psi(L)$ for every $L\in \mathcal{L}_v(X)$. Passing to appropriate subnets, we may assume that $x_\beta^*$ and $x_\beta$ converge weakly-$*$ to $x^*\in B_{X^*}$ and $x^{**}\in B_{X^{**}}$ respectively, and $\lambda_\beta$ converges to $\lambda\in\mathbb{T}$.  For any fixed $K\in\mathcal{K}_v(X)$, we observe that
$$0=\Psi(K)=\lim_\beta \Psi_\beta(K)=\lim_\beta \lambda_\beta K^*(x^*_\beta)(x_\beta)=\lambda x^{**}(K^*x^*).$$
Since $K$ is chosen arbitrarily, we see that at least one of $x^*$ and $x^{**}$ is the zero vector. Note that $x_\beta$ converges to $0$ weakly whenever $x^{**}=0$. Furthermore, we have
$$v_e(T)=\Psi(T)=\lim_\beta \Psi_\beta(T)=\lim_\beta \lambda x^*_\beta(Tx_\beta)\leq w_v(T).$$
\end{proof}

Finally, we present the space of compact operators which are not M-ideals endowed with the numerical radius norm using Theorem \ref{NACPP}.

\begin{prop}\label{counterexamnum}
   Let $X$ and $Y$ be Banach spaces over the scalar field $\mathbb{K}$ and assume that both $n(X\oplus_p\mathbb{K})$ and $n(Y\oplus_q\mathbb{K})$ are positive for $1\leq q<p<\infty$. If there exists an operator $T\in {\mathcal{L}(X,Y)}$ such that $T^*$ does not attain its norm, then the following hold.
    \begin{itemize}
        \item[(1)] $\mathcal{K}_v\left((X\oplus_p\mathbb{K})\oplus_\infty(Y\oplus_q\mathbb{K})\right)$ is not an M-ideal in $\mathcal{L}_v\left((X\oplus_p\mathbb{K})\oplus_\infty(Y\oplus_q\mathbb{K})\right)$.
        \item[(2)] $\mathcal{K}_v\left((X\oplus_p\mathbb{K})\oplus_1(Y\oplus_q\mathbb{K})\right)$ is not an M-ideal in $\mathcal{L}_v\left((X\oplus_p\mathbb{K})\oplus_1(Y\oplus_q\mathbb{K})\right)$.
    \end{itemize}
\end{prop}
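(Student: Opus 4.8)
The plan is to argue by contradiction using part~(1) of Theorem~\ref{NACPP}. Write $Z_1=X\oplus_p\mathbb{K}$, $Z_2=Y\oplus_q\mathbb{K}$, and let $Z$ be $Z_1\oplus_\infty Z_2$ for~(1) and $Z_1\oplus_1 Z_2$ for~(2); in both cases $n(Z)>0$, since $n$ of an $\ell_\infty$- or $\ell_1$-sum of two spaces is the minimum of the indices of the summands (see \cite[Proposition~1]{MP}). Assume for contradiction that $\mathcal{K}_v(Z)$ is an M-ideal in $\mathcal{L}_v(Z)$. I will produce a single $S\in\mathcal{L}_v(Z)$ whose adjoint $S^*$ does not attain its numerical radius while $v(S)>v_e(S)$, which contradicts Theorem~\ref{NACPP}(1). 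The operator is the ``off-diagonal'' copy of $\widehat{T}\colon Z_1\to Z_2$, $\widehat{T}(x,s)=(Tx,s)$, that is,
\[
S\bigl((x,s),(y,t)\bigr)=\bigl((0,0),(Tx,s)\bigr);
\]
note that $T\neq0$ (the zero operator attains its norm), so $\|T\|>0$.

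First I would show $v(S)=\|S\|=\|\widehat{T}\|=:M$. The bound $v(S)\le\|S\|=\|\widehat{T}\|$ is immediate; for $v(S)\ge M-\varepsilon$, pick $(x,s)\in S_{Z_1}$ almost norming $\widehat{T}$, a functional $(y^*,b)\in S_{Z_2^*}$ norming $\widehat{T}(x,s)$, and $(y,t)\in S_{Z_2}$ normed by $(y^*,b)$; then $z=\bigl((x,s),(y,t)\bigr)$ and $z^*=\bigl((0,0),(y^*,b)\bigr)$ form a pair in $\Pi(Z)$ with $z^*(Sz)=\|\widehat{T}(x,s)\|_{Z_2}\approx M$ (in the $\oplus_1$ case take $z=\bigl((x,s),0\bigr)$ and $z^*$ the sum of a functional norming $(x,s)$ and one norming $\widehat{T}(x,s)$). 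The key elementary fact — where the exponent condition $q<p$ and the scalar summands are essential — is the strict inequality $M=\|\widehat{T}\|>\max\{\|T\|,1\}$: comparing $\|\widehat{T}(x,s)\|_{Z_2}=(\|Tx\|^q+|s|^q)^{1/q}$ with $\|(x,s)\|_{Z_1}=(\|x\|^p+|s|^p)^{1/p}$ and expanding near $\|x\|=1$ (the gain $(1-\|x\|^p)^{q/p}$ dominates the first-order loss in $\|Tx\|$ because $q/p<1$) and near $s=1$ (there $\|x\|^q$ dominates $\|x\|^p$ because $q<p$) gives the strict gap. Since $\|T\|_e\le\|T\|<M$, it then suffices to check $v_e(S)\le\|T\|_e$: for a compact $A\colon X\to Y$ with $\|T-A\|<\|T\|_e+\varepsilon$, the operator $K\bigl((x,s),(y,t)\bigr)=\bigl((0,0),(Ax,s)\bigr)$ is compact (a compact plus a rank-one operator) and $S-K$ is the off-diagonal copy of $(x,s)\mapsto\bigl((T-A)x,0\bigr)$, so $v(S-K)\le\|S-K\|\le\|T-A\|$. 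Hence $v_e(S)\le\|T\|_e<M=v(S)$.

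The remaining step, which I expect to be the main obstacle, is that $S^*$ does not attain its numerical radius; this is where ``$T^*$ does not attain its norm'' is used. One has $v(S^*)=M$, since $v(S^*)\ge v(S)=M$ (restrict to the canonical image of $\Pi(Z)$ in $\Pi(Z^*)$) and $v(S^*)\le\|S^*\|=M$; and $S^*$ is the off-diagonal copy of $\widehat{T}^{*}\colon Z_2^*\to Z_1^*$, $\widehat{T}^{*}(y^*,b)=(T^*y^*,b)$, acting on $Z^*$, which is the $\oplus_1$-sum of $Z_1^*=X^*\oplus_{p^*}\mathbb{K}$ and $Z_2^*=Y^*\oplus_{q^*}\mathbb{K}$ in case~(1) and their $\oplus_\infty$-sum in case~(2). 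Repeating the H\"older estimates above, one sees that if $S^*$ attained the value $M$ at some pair of $\Pi(Z^*)$, then $\widehat{T}^{*}$ would attain its norm at the corresponding unit vector of $Z_2^*$. Finally, $\widehat{T}^{*}$ attains its norm if and only if $T^*$ does: if $\widehat{T}^{*}$ attains its norm at $(y_0^*,b_0)$, then $y_0^*\neq0$ (else the value is $|b_0|\le1<M$), and for every $\chi\in S_{Y^*}$ the vector $(\|y_0^*\|\chi,b_0)$ has the same $Z_2^*$-norm as $(y_0^*,b_0)$, so $\|T^*\chi\|\le\|T^*(y_0^*/\|y_0^*\|)\|$; thus $T^*$ attains its norm at $y_0^*/\|y_0^*\|$. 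Since $T^*$ does not attain its norm, $S^*$ does not attain its numerical radius, contradicting Theorem~\ref{NACPP}(1) and proving both~(1) and~(2) at once — the two cases differ only by swapping the $\ell_\infty$- and $\ell_1$-sums of $Z_1,Z_2$ and of $Z_1^*,Z_2^*$, the inequality $q<p$ becoming $p^*<q^*$ on the dual side, which is precisely what the $\widehat{T}^{*}$-computation needs.
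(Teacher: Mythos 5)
Your argument is correct and follows essentially the same route as the paper: both contradict Theorem~\ref{NACPP}(1) with the same off-diagonal operator $S$ built from $T$ together with a scalar coordinate, verifying $v(S)=\|S\|>\|T\|\geq v_e(S)$ via the H\"older/exponent gap coming from $q<p$, and reducing non-attainment of the numerical radius of $S^*$ to non-attainment of $\|T^*\|$. The only differences are cosmetic — you omit the paper's $\|T\|$-scaling of the scalar slot (harmless, since $\|\widehat{T}\|>\max\{\|T\|,1\}$ still holds) and you work directly with pairs in $\Pi(Z)$ where the paper passes to $\Pi(Z^*)$ and, in case~(2), to Bishop--Phelps.
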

\begin{proof}~

\begin{itemize}
\item[(1)] For an operator $S\in \mathcal{L}_v\left((X\oplus_p\mathbb{K})\oplus_\infty(Y\oplus_q\mathbb{K})\right)$ defined by 
$$S\left((x,\alpha),(y,\beta)\right)=\left((0,0),(Tx,\|T\|\alpha)\right),$$ 
it is enough to prove that $v_e(S)<v(S)$ and $S^*$ does not attain its numerical radius from (1) of Proposition \ref{NACPP}.

We may assume that $\|T\|=1$.  Since $S^*$ satisfies
    $$\left(S^*((x^*,a),(y^*,b))\right),\left((x,c),(y,d)\right)=\left((T^* y^*,b),(0,0)\right)\left((x,c),(0,0)\right)$$ 
for each $a,b,c,d\in\mathbb{K}$, $x^*\in X^*$, $y\in Y$ and $y^*\in Y^*$, we deduce the following inequalities for the conjugate $p^*$ and $q^*$ of $p$ and $q$ respectively. \begin{itemize}
\item The case $q=1$ 
\begin{align*}\|S^*\|&=\sup_{y^*\in S_{Y^*}}\left\|S^*\left((0,0),\left(y^*,1\right)\right)\right\|=\sup_{y^*\in S_{Y^*}}\left\|\left(T^*y^*,1\right)\right\|\\&=\left\|\left(\|T^*\|,1\right)\right\|_{p^*}=2^{\frac{1}{p^*}}>1.\end{align*}

\item The case $q>1$
    \begin{align*}
    \|S^*\|&=\sup_{\substack{y^*\in S_{Y^*}\\ 0\leq|\lambda|\leq1}}\left\|S^*\left((0,0),\left(|\lambda|y^*,\left(1-|\lambda|^{q^*}\right)^{\frac{1}{q^*}}\right)\right)\right\|\\
    &=\sup_{\substack{y^*\in S_{Y^*}\\ 0\leq|\lambda|\leq1}}\left(|\lambda|^{p^*}\|T^*y^*\|^{p^*}+\left(1-|\lambda|^{q^*}\right)^{\frac{p^*}{q^*}}\right)^{\frac{1}{p^*}}\\
    &=\sup_{0\leq|\lambda|\leq1}\left(|\lambda|^{p^*}\|T^*\|^{p^*}+\left(1-|\lambda|^{q^*}\right)^{\frac{p^*}{q^*}}\right)^{\frac{1}{p^*}}\\
    &=\sup_{0\leq|\lambda|\leq1}\left\|\left(|\lambda|,\left(1-|\lambda|^{q^*}\right)^{\frac{1}{q^*}}\right)\right\|_{p^*}\\
    &>1.
    \end{align*}
\end{itemize}
From these inequalities, it is easy to see that the norm of $S$ is greater than $1$ and $S^*$ does not attain its norm.

    We claim $v(S)=\|S\|$. Note that $v(S)=v(S^*)$ (see \cite{Bol}). For given $\varepsilon>0$, take a norm $1$ element $(y^*,t) \in Y^*\oplus_{q^*}\mathbb{K}$ such that
    $$\|\left(T^*y^*,t\right)\|=\|S^*((0,0),(y^*,t))\|>\|S\|-\varepsilon.$$
    By Hahn-Banach theorem, we choose norm $1$ elements $(x^{**},\tilde{t}_1)\in X^{**}\oplus_p \mathbb{K}$ and $(y^{**},\tilde{t}_2)\in X^{**}\oplus_q \mathbb{K}$ such that
    \begin{align*} 
    (x^{**},\tilde{t}_1)\left((T^*y^*,t)\right)&=\|(T^*y^*,t)\| \text{ and } (y^{**},\tilde{t}_2)\left((y^*,t)\right)=1.
    \end{align*}

    Then we have $\left(\left((x^{**},\tilde{t}_1),(y^{**},\tilde{t}_2)\right),\left((0,0),(y^*,t)\right)\right)\in \Pi\left(\left((X\oplus_p\mathbb{K})\oplus_\infty(Y\oplus_q\mathbb{K})\right)^*\right)$ and the following inequality which shows the claim.
    $$v(S^*)\geq \left((x^{**},\tilde{t}_1),(y^{**},\tilde{t}_2)\right)\left(S^*((0,0),(y^*,t))\right) >\|S\|-\varepsilon.$$
    
    From this claim, we also see that $S^*$ does not attain its numerical radius since it does not attain its norm.

    In order to prove $v_e(S)<v(S)$, define $V\in\mathcal{K}_v\left((X\oplus_p\mathbb{K})\oplus_\infty(Y\oplus_q\mathbb{K})\right)$  by 
$$V((x,\alpha),(y,\beta))=((0,0),(0,\alpha)).$$ It is clear that $\|S-V\|=\|T\|=1$, and this implies
    $$v_e(S)\leq v(S-V)\leq \|S-V\|=1<v(S).$$

\item[(2)]
    Since the proof is similar to that of (1), we focus on the differences. The operator $S$ in the proof of (1) is also well defined on $(X\oplus_p\mathbb{K})\oplus_1(Y\oplus_q\mathbb{K})$. Moreover, $S^*$ does not attain its numerical radius and $S$ satisfies $\|S\|>\|T\|$. In order to show $v(S)=\|S\|$, for given $\varepsilon>0$, take $y^*\in Y^*$ and $t\in\mathbb{K}$ such that $\|y^*\|^{q^*}+|t|^{q^*}=1$ and 
    $$\|\left(T^*y^*,t\right)\|=\|S^*((0,0),(y^*,t))\|>\|S\|-\varepsilon.$$

Thanks to the well known Bishop-Phelps theorem which asserts that the set of norm attaining functionals is norm dense in the dual space, we take norm $1$ elements $(x^{**},\tilde{t}_1)\in X^{**}\oplus_p\mathbb{K}$ and $(x^{*},\tilde{t}_2)\in X^{*}\oplus_{p^*}\mathbb{K}$ such that 
    \begin{align*}
        (x^{**},\tilde{t}_1)\left((T^*y^*,t)\right)>\|S\|-\varepsilon \text{ and }(x^{**},\tilde{t}_1)\left((x^{*},\tilde{t}_2)\right)=1.
    \end{align*}
 Hence, we have $\left(\left((x^{**},\tilde{t}_1),(0,0)\right),\left((x^*,\tilde{t}_2),(y^*,t)\right)\right)\in\Pi\left(\left((X\oplus_p\mathbb{K})\oplus_1(Y\oplus_q\mathbb{K})\right)^*\right)$ and 
\begin{align*}
v(S^*)&\geq \left((x^{**},\tilde{t}_1),(0,0)\right)\left(S^*\left((x^*,\tilde{t}_2),(y^*,t)\right)\right)\\
&= (x^{**},\tilde{t}_1)\left((T^*y^*,t)\right) \\
&>\|S\|-\varepsilon.
\end{align*}
By the same arguement in (1), we see that $S^*$ does not attain its numerical radius and $v_e(S)<v(S)$ which finish the proof.
\end{itemize}
\end{proof}

For $s\in (1,\infty)$ and an operator $S\in \mathcal{L}(\ell_s)$ defined by $S\left(\sum_i \alpha_i e_i\right)=S\left(\sum_i  \left(1-\frac{1}{i}\right)\alpha_i e_i\right)$  where $\{e_i\}$ is the canonical basis of $\ell_s$, it is obvious that $S^*$ does not attain its norm. Hence, we have the following corollary of Proposition \ref{counterexamnum}. Note that the numerical index of a complex Banach space is greater than $0$.

\begin{cor}For $s\in (1,\infty)$ and $1\leq q<p<\infty$, if it holds that $n((\ell_s\oplus_p\mathbb{K}))>0$ and $n((\ell_s\oplus_q\mathbb{K}))>0$,  then the following holds.
    \begin{itemize}
        \item[(1)] $\mathcal{K}_v\left((\ell_s\oplus_p\mathbb{K})\oplus_\infty(\ell_s\oplus_q\mathbb{K})\right)$ is not an M-ideal in $\mathcal{L}_v\left((\ell_s\oplus_p\mathbb{K})\oplus_\infty(\ell_s\oplus_q\mathbb{K})\right)$.
        \item[(2)] $\mathcal{K}_v\left((\ell_s\oplus_p\mathbb{K})\oplus_1(\ell_s\oplus_q\mathbb{K})\right)$ is not an M-ideal in $\mathcal{L}_v\left((\ell_s\oplus_p\mathbb{K})\oplus_1(\ell_s\oplus_q\mathbb{K})\right)$.
    \end{itemize}
\end{cor}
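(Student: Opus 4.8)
The plan is to obtain the corollary as an immediate instance of Proposition \ref{counterexamnum}. I would take $X=Y=\ell_s$, so that $X\oplus_p\mathbb{K}=\ell_s\oplus_p\mathbb{K}$ and $Y\oplus_q\mathbb{K}=\ell_s\oplus_q\mathbb{K}$; then the standing hypotheses $n(X\oplus_p\mathbb{K})>0$ and $n(Y\oplus_q\mathbb{K})>0$ of Proposition \ref{counterexamnum} are literally the hypotheses $n(\ell_s\oplus_p\mathbb{K})>0$ and $n(\ell_s\oplus_q\mathbb{K})>0$ of the corollary, and the condition $1\le q<p<\infty$ is shared by both statements. Thus everything reduces to producing one operator $T\in\mathcal{L}(\ell_s)=\mathcal{L}(X,Y)$ whose adjoint fails to attain its norm, and for this I would use the diagonal operator $S$ displayed just before the statement.

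The only genuine (and elementary) step is to check that $S^*$ does not attain its norm. Identifying $\ell_s^*$ with $\ell_{s^*}$ through the basis $\{e_i\}$, the adjoint is again diagonal, $S^*\bigl(\sum_i\beta_ie_i\bigr)=\sum_i\bigl(1-\tfrac1i\bigr)\beta_ie_i$, and $\|S^*\|=\sup_i\bigl(1-\tfrac1i\bigr)=1$ (note $s^*\in(1,\infty)$ since $s\in(1,\infty)$). If some $x^*=\sum_i\beta_ie_i\in S_{\ell_{s^*}}$ satisfied $\|S^*x^*\|=1$, then $\sum_i\bigl(1-\tfrac1i\bigr)^{s^*}|\beta_i|^{s^*}=1=\sum_i|\beta_i|^{s^*}$, hence $\sum_i\bigl(1-(1-\tfrac1i)^{s^*}\bigr)|\beta_i|^{s^*}=0$; since every coefficient $1-(1-\tfrac1i)^{s^*}$ is strictly positive, this forces $x^*=0$, a contradiction. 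Hence $S^*$ does not attain its norm.

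With $X=Y=\ell_s$ and $T=S$ in hand, Proposition \ref{counterexamnum} applies verbatim: part (1) yields that $\mathcal{K}_v\bigl((\ell_s\oplus_p\mathbb{K})\oplus_\infty(\ell_s\oplus_q\mathbb{K})\bigr)$ is not an M-ideal in the corresponding space of bounded operators, and part (2) yields the same assertion for the $\oplus_1$-sum. There is essentially no obstacle to overcome here: the substantive content is entirely carried by Proposition \ref{counterexamnum}, and the single computation one has to perform oneself — the non-attainment of $\|S^*\|$ above — is the only place where any argument is needed, so it is the ``hard part'' only in that nominal sense.
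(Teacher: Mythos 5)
Your proposal is correct and follows exactly the paper's route: the paper also deduces the corollary by applying Proposition~\ref{counterexamnum} with $X=Y=\ell_s$ and the diagonal operator $S$ with entries $1-\tfrac1i$, merely asserting that $S^*$ does not attain its norm as ``obvious.'' Your explicit verification of that non-attainment is a correct filling-in of the detail the paper omits, so there is nothing to add.
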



\subsection*{Funding}
The first and second author were supported by the National Research Foundation of Korea(NRF) grant funded by the Korea government(MSIT) [NRF-2020R1C1C1A01012267].




\end{document}